\documentclass[a4paper,11pt]{article}
\usepackage[T2A]{fontenc}
\usepackage[cp1251]{inputenc}
\usepackage[ukrainian, english]{babel}
\usepackage{amssymb,amsfonts,amsthm,amsmath,mathtext,cite,enumerate,float}
\usepackage[onehalfspacing]{setspace}
\usepackage{geometry}
\usepackage{graphicx}
\usepackage{multirow}
\usepackage{indentfirst}
\usepackage{color}


\newcommand{\aut}[1][n]{B_{#1}}
\newcommand{\syl}[1][n]{G_{#1}}

\newcommand{\m}{^{-1}}

\newcommand{\rest}[1][n]{_{(#1)}}

\newtheorem{theorem}{Theorem}
\newtheorem{proposition}[theorem]{Proposition}
\newtheorem{corollary}[theorem]{Corollary}
\newtheorem{lemma}[theorem]{Lemma}
\newtheorem{example}[theorem]{Example}
\newtheorem{remark}[theorem]{Remark}

\theoremstyle{definition}


\selectlanguage{english}
\begin{document}
\title{ Commutators of Sylow subgroups of alternating and symmetric groups, commutator width in the wreath product of groups}
\author{Ruslan Skuratovskii} 
\date{\empty}
\maketitle



  \begin{abstract}
  This paper investigate bounds of the commutator width \cite {Mur} of a wreath product of two groups.
      The commutator width of direct limit of wreath product of cyclic groups are found.
    For given a permutational wreath product sequence of cyclic groups we investigate its commutator width and some properties of its commutator subgroup.
      It was proven that the commutator width of an arbitrary element of the wreath product of cyclic groups $C_{p_i}, \, p_i\in \mathbb{N} $ equals to 1. As a
corollary, it is shown that the commutator width of Sylows $p$-subgroups of symmetric and alternating groups $p \geq 2$ are also equal to 1.
The structure of commutator and second commutator of Sylows $2$-subgroups of symmetric and alternating groups were investigated.
    For an arbitraty group $B$ an upper bound of commutator width of $C_p \wr B$ was founded.




Key words: wreath product of group, commutator width of wreath product, commutator width of Sylow $p$-subgroups, commutator  and centralizer subgroup of alternating group.\\
\textbf{Mathematics Subject Classification}: 20B27, 20E08, 20B22, 20B35,20F65,20B07, 20E22, 20E45.
  \end{abstract}


  \begin{section}{Introduction}

Let G be a group. The commutator width of $G$, $cw(G)$ is defined to be the least
integer $n$, such that every element of $G'$ is a product of at most $n$ commutators
if such an integer exists, and $cw(G) = \infty$ otherwise. The first example of a
finite perfect group with $cw(G) > 1$ was given by Isaacs in \cite{Isacs}.

Commutator width of groups, and of elements has proven to be an important property in particular via its connections with "stable commutator length" and bounded cohomology.



A form of commutators of wreath product $A\wr B$ was briefly considered in \cite{Meld} and presented by us as wreath recursion. For more deep description of this form we take into account the commutator width $(cw(G))$ which was presented in work of Muranov  \cite {Mur}.

The form of commutator presentation \cite{Meld} was presented by us in form of wreath recursion \cite{Lav} and commutator width of it was studied. We impose more weak condition on the presentation of wreath product commutator then it was imposed by J. Meldrum.

It was known that, the commutator width of iterated wreath products of nonabelian finite simple groups is bounded by an absolute constant \cite {nikolov, Fite}. But it was not proven that commutator subgroup of $\underset{i=1}{\overset{k}{\mathop{\wr }}}\,{{\mathcal{C}}_{{{p}_{i}}}}$ consists of commutators. We generalize the passive group of this wreath product to any group $B$ instead of only wreath product of cyclic groups and obtain an exact commutator width.
Our goal is to improve these estimations and genralize it on a bigger class of passive groups of wreath product. Also we are going to prove that the commutator width of Sylows $p$-subgroups of symmetric and alternating groups $p \geq 2$ is 1.

  \end{section}

  \begin{section}{Preliminaries }

We call by ${{g}_{ij}}$ the state of  $g\in Aut{{X}^{[k]}}$ in vertex ${{v}_{ij}}$ as it was called in \cite{Lav, Gr}.

Denote by $fun(B,A)$ the direct product of isomorphic copies of A indexed by elements of $B$.
Thus, $fun(B,A)$ is a function $B \rightarrow  A$ with the conventional multiplication and finite supports.
The extension of $fun(B, A)$ by $B$ is called the discrete wreath product of
$A,  B$.
Thus, $A \wr  B: = fun(B, A) \leftthreetimes B$ moreover, $bfb^{-1} = f^b$, $b \in B$, $f \in fun(B, A)$.
As well known that a wreath product of permutation groups is associative construction.

Let $G$ be a group acting (from the left) by permutations on
a set $X$ and let $H$ be an arbitrary group.
Then the (permutational) wreath product
$H \wr G$ is the semidirect product $H^X \leftthreetimes G $, 
 where $G$ acts on the direct power $H^X$ by
the respective permutations of the direct factors.
The group $C_p$ is equipped with a natural action by the left shift on $X =\{1,…,p\}$, $p\in \mathbb{N}$.

The multiplication rule of automorphisms $g$, $h$ which presented in form of the wreath recursion \cite{Ne}
$g=(g\rest[1],g\rest[2],\ldots,g\rest[d])\sigma_g, \
h=(h\rest[1],h\rest[2],\ldots,h\rest[d])\sigma_h,$ is given by the formula:
$$g\cdot h=(g\rest[1]h\rest[\sigma_g(1)],g\rest[2]h\rest[\sigma_g(2)],\ldots,g\rest[d]h\rest[\sigma_g(d)])\sigma_g \sigma_h.$$

We define $\sigma$ as $(1,2,\ldots, p)$ where $p$ is defined by context.

We consider $B \wr (C_p,\, X)$, where $X=\{1,..,p\}$, and $B'=\{[f,g]\mid f,g\in B\}$, $p\geq 1$.
If we fix some indexing $\{x_1, x_2, ... , x_m \}$ of set the $X$, then an element $h\in H^X$ can be written as  $(h_1, ..., h_m ) $ for $h_i \in H$.

 The set $X^*$ is naturally a vertex set of a regular rooted tree, i.e. a connected graph without cycles
and a designated vertex $v_0$ called the root, in which two words are connected by an edge if and only if they are of form $v$ and $vx$, where $v\in X^*$, $x\in X$.
The set $X^n \subset X^*$ is called the $n$-th level of the tree $X^*$
and $X^0 = \{v_0\}$. We denote by $v_{j,i}$ the vertex of $X^j$, which has the number $i$.
Note that the unique vertex $v_{k,i}$ corresponds to the unique word $v$ in alphabet $X$.
For every automorphism $g\in Aut{{X}^{*}}$ and every word $v \in X^{*}$  define the section (state) $g_{(v)} \in AutX^{*}$ of $g$ at $v$ by the rule: $g_{(v)}(x) = y$ for $x, y \in X^*$  if and only if $g(vx) = g(v)y$.
The subtree of $X^{*}$ induced by the set of vertices $\cup_{i=0}^k X^i$ is denoted by $X^{[k]}$.
 The restriction of the action of an automorphism $g\in AutX^*$ to the subtree $X^{[l]}$ is denoted by $g_{(v)}|_{X^{[l]}}$.
 A restriction $g_{(v)}|_{X^{[1]}} $ is called the vertex permutation (v.p.) of $g$ in a vertex $v$.
We call the endomorphism $\alpha|_{v} $ the restriction of $g$ in a vertex $v$ \cite{Ne}. For example, if $|X| = 2$ then we just have to distinguish active vertices, i.e., the
vertices for which $\alpha|_{v} $ is non-trivial.
 As well known if $X=\{0, 1 \} $ then $AutX^{[k-1]} \simeq \underbrace {C_2 \wr ...\wr C_2}_{k-1}$ \cite{Ne}.

Let us label every vertex of ${{X}^{l}},\,\,\,0\le l<k$ by sign 0 or 1 in relation to state of v.p. in it. Let us denote state value of $\alpha $ in $v_{ki}$ as ${{s}_{ki}}(\alpha )$ we put that ${{s}_{ki}}(\alpha )=1$ if $\alpha|_{v_{ki}}  $  is non-trivial, 
 and $s_{ki}(\alpha )=0$ if $\alpha|_{v_{ki}} $ is trivial.
Obtained by such way a vertex-labeled regular tree is an element of $Aut{{X}^{[k]}}$.
All undeclared terms are from \cite{Sam, Gr}.

Let us make some notations.
 The commutator of two group elements $a$ and $b$, denoted as
$[a,b] = aba\m b\m,$
 conjugation by an element $b$ as
\[
a^b = bab\m,
\]
$\sigma = (1,2, \ldots, p)$. Also $G_k \simeq Syl_2 A_{2^k}$ that is Sylow 2-subgroup of $A_{2^k}$, $B_k = \wr_{i=1}^k C_2 $. It is convenient do not distinguish $G_k$ $(B_k)$ from its  isomorphic copy in $AutX^{[k]}$. The structure of $G_k$ was investigated in \cite{SkIrred}. For this research we can regard $G_k$ and $B_k$ as recursively constructed i.e.
$B_1 = C_2$,
$B_k = B_{k-1} \wr C_2$   for $k>1$,
$G_1 = \langle e \rangle$,
$G_k = \{(g_1, g_2)\pi \in B_{k} \mid g_1g_2 \in G_{k-1} \}$ for $k>1$.


The commutator length of an element $g$ of the derived
subgroup of a group $G$ is denoted \emph{clG(g)}, is the minimal $n$ such that there
exist elements $x_1, . . . , x_n, y_1, . . . , y_n$ in G such that $g = [x_1, y_1] . . . [x_n, y_n]$.
The commutator length of the identity element is 0. The commutator width
of a group $G$, denoted $cw(G)$, is the maximum of the commutator lengths
of the elements of its derived subgroup $[G,G]$. It is also related to solvability of quadratic equations in groups \cite{nikolov}.
  \end{section}

  \begin{section}{Main result}

Let $B \wr C_p$ is a regular wreath product of cyclic group of order $p$ and arbitrary group $B$.




\begin{lemma} \label{form of comm} An element of form
$(r_1, \ldots, r_{p-1}, r_p) \in W'= (B \wr C_p)'$ iff product of all $r_i$ (in any order) belongs to $B'$, where $B$ is an arbitrary group.
\end{lemma}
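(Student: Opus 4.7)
The plan is to handle the two directions by exhibiting the abelianization of $W = B \wr C_p$ explicitly.

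For the forward direction (necessity), I would define
\[
\phi\colon W \to B^{\mathrm{ab}} \times C_p,\qquad (b_1,\dots,b_p)\sigma^k \mapsto \bigl(\overline{b_1\cdots b_p},\, k\bigr),
\]
where $\overline{\ \cdot\ }$ denotes the image in $B/B'$. Using the multiplication rule $gh=(g_1 h_{\sigma_g(1)},\dots,g_p h_{\sigma_g(p)})\sigma_g\sigma_h$ one checks $\phi$ is a homomorphism: once we pass to the abelian quotient of $B$, the reindexing induced by $\sigma^k$ becomes irrelevant, so $\overline{\prod_i b_i c_{\sigma^k(i)}} = \overline{\prod_i b_i}\cdot \overline{\prod_j c_j}$. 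Since the target is abelian, $W' \subseteq \ker\phi$; applied to a pure tuple $(r_1,\dots,r_p)$ this gives $r_1\cdots r_p \in B'$, and the condition is independent of the order of the factors because any two orderings of $r_1,\dots,r_p$ differ by a product of commutators in $B$.

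For the backward direction (sufficiency), I would construct enough elements of $W'$ to hit every tuple whose product lies in $B'$. Two elementary building blocks suffice. (a) For any $c=[x,y]\in B'$, the identity $[(x,1,\dots,1),(y,1,\dots,1)]=(c,1,\dots,1)$ shows $(c,1,\dots,1)\in W'$ for every $c\in B'$. (b) A direct computation with the multiplication rule yields $[\sigma,(1,\dots,1,b)]=(b,1,\dots,1,b^{-1})$, and conjugating by powers of $\sigma$ places the pair $b,\,b^{-1}$ at any two chosen coordinates. Given $(r_1,\dots,r_p)$ with $r_1\cdots r_p=c\in B'$, multiplying by $(c^{-1},1,\dots,1)\in W'$ reduces the claim to the case $r_1\cdots r_p=1$ in $B$. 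I then induct on the number of nontrivial coordinates: whenever at least two entries $a,b$ sit at distinct positions $i\neq j$, multiplying by the element from (b) that places $b$ at position $i$ and $b^{-1}$ at position $j$ trivialises the $j$-th coordinate and strictly reduces the count while preserving ``product equals $1$''. The base case is a tuple of the form $(a,1,\dots,1,a^{-1})$ (or a cyclic shift), which is exactly in (b).

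The only genuine delicacy is that $B$ is not assumed abelian, so while composing entries in the first coordinate during the induction one must keep track of the order of multiplication; however, any discrepancy thus introduced is a single element of $B'$, which can be absorbed into family (a) and is therefore already in $W'$. Beyond this bookkeeping I expect no serious obstacle, since the whole argument is essentially a direct computation of $W/W' \cong B^{\mathrm{ab}}\times C_p$.
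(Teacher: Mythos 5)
Your proof is correct, and on the necessity side it takes a genuinely different route from the paper. The paper follows Meldrum's Corollary 4.9: it writes the coordinates of a single commutator explicitly as $r_i=h_i g_{a(i)} h_{ab(i)}^{-1} g_{aba^{-1}(i)}^{-1}$, observes that the product over $i$ collapses into $B'$ because each $h_j$ and $g_j$ occurs exactly once with each sign, and then argues closure of this form under products of tuples. You instead exhibit the homomorphism $\phi\colon W\to B/B'\times C_p$, $(b_1,\dots,b_p)\sigma^k\mapsto(\overline{b_1\cdots b_p},k)$, and read off $W'\subseteq\ker\phi$; this is cleaner, avoids all bookkeeping with the indices $a(i)$, $ab(i)$, $aba^{-1}(i)$, and automatically delivers the ``in any order'' clause, since reorderings change the product only by an element of $B'$. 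On the sufficiency side the two arguments are essentially the same --- both reduce a tuple with product in $B'$ to the identity using the building blocks $(c,e,\dots,e)$ with $c\in B'$ and $(\dots,b,\dots,b^{-1},\dots)$ --- but you actually verify that these blocks lie in $W'$ by displaying them as the commutators $[(x,e,\dots,e),(y,e,\dots,e)]$ and $[\sigma^j,(e,\dots,e,b)]$, a step the paper leaves implicit. (Under the paper's multiplication convention $[\sigma,(e,\dots,e,b)]$ comes out as $(e,\dots,e,b,b^{-1})$ rather than $(b,e,\dots,e,b^{-1})$, but this is immaterial since, as you note, varying $j$ and conjugating by powers of $\sigma$ places the pair at any two coordinates; your handling of the order discrepancy as an extra factor in $B'$ absorbed by family (a) is also sound.) What the paper's route buys is the explicit wreath-recursion form $w=(r_1,\dots,r_{p-1},r_1^{-1}\cdots r_{p-1}^{-1}x)$ that is reused in the later lemmas; what yours buys is a self-contained identification of the abelianization of $B\wr C_p$, with a necessity argument that applies verbatim to elements carrying a nontrivial $\sigma$-part.
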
 

\begin{proof}

Analogously to the Corollary 4.9 of the Meldrum's book \cite{Meld} we can deduce new presentation of commutators in form of wreath recursion
\begin{eqnarray*}
w=(r_1, r_2, \ldots, r_{p-1},  r_p),
\end{eqnarray*}
where $r_i\in B$.
If we multiply elements from a tuple $(r_1, \ldots, r_{p-1}, r_p)$, where $r_i={{h}_{i}}{{g}_{a(i)}}h_{ab(i)}^{-1}g_{ab{{a}^{-1}}(i)}^{-1}$, $h, \, g \in B$ and $a,b \in C_p$, then we get a product
\begin{equation} \label{Meld} 
 x=\stackrel{p}{ \underset{\text{\it i=1}} \prod} r_i = \prod\limits_{i=1}^{p}{{{h}_{i}}{{g}_{a(i)}}h_{ab(i)}^{-1}g_{ab{{a}^{-1}}(i)}^{-1} \in B'},
\end{equation}
where $x $ is a product of corespondent commutators.
Therefore, we can write $r_p = r_{p-1}\m \ldots r_1\m x$. We can rewrite element $x\in B'$ as the product $x = \prod \limits ^{m}_{j=1} [f_j,g_j]$,  $m \le cw(B)$.

Note that we impose more weak condition on the product of all $r_i$ to belongs to $B'$ then in Definition 4.5. of form $P(L)$ in \cite{Meld}, where the product of all $r_i$ belongs to a subgroup $L$ of $B$ such that $ L>B'$.

 In more detail deducing of our representation constructing can be reported in following way.
 If we multiply elements having form of a tuple $(r_1, \ldots, r_{p-1}, r_p)$, where $r_i={{h}_{i}}{{g}_{a(i)}}h_{ab(i)}^{-1}g_{ab{{a}^{-1}}(i)}^{-1}$, $h, \, g \in B$ and $a,b \in C_p$, then in case $cw(B)=0$ we obtain a product
\begin{equation}\label{Meld2}
 \stackrel{p}{ \underset{\text{\it i=1}} \prod} r_i = \prod\limits_{i=1}^{p}{{{h}_{i}}{{g}_{a(i)}}h_{ab(i)}^{-1}g_{ab{{a}^{-1}}(i)}^{-1} \in B'}.
\end{equation}

Note that if we rearange elements in (1) as $h_{1} h_{1}^{-1} g_{1}g_2^{-1}h_{2} h_{2}^{-1} g_{1}g_2^{-1} ...  h_{p} h_{p}^{-1} g_{p}g_p^{-1}$ then by the reason of such permutations we obtain a product of corespondent commutators. 
Therefore, following equality holds true

\begin{equation}\label{HH}
\prod\limits_{i=1}^{p}{{{h}_{i}}{{g}_{a(i)}}h_{ab(i)}^{-1}g_{ab{{a}^{-1}}(i)}^{-1} } =\prod\limits_{i=1}^{p}h_{i} h_{i}^{-1} g_{i}g_i^{-1}x \in B',
\end{equation}
where $x $ is a product of corespondent commutators.
Therefore,
\begin{eqnarray} \label{form}
(r_1, \ldots, r_{p-1}, r_p) \in W' \mbox{ iff } r_{p-1} \cdot \ldots \cdot r_{1} \cdot r_p = x\in B'.
\end{eqnarray}
 Thus, one element from states of wreath recursion $(r_1, \ldots, r_{p-1}, r_p) $ depends on rest of $r_i$. This dependence contribute that the product $\prod\limits_{j=1}^{p}r_{j}$ for an arbitrary sequence $\{ r_{j} \}_{j=1} ^{p}$
 belongs to  $B'$. Thus, $r_p$ can be expressed as:
\begin{eqnarray*}
r_p = r_{1}\m \cdot \ldots \cdot r_{p-1}\m x.
\end{eqnarray*}

Denote a $j$-th tuple, which consists of a wreath recursion elements, by $(r_{{j}_1},r_{{j}_2},..., r_{{j}_p} )$.
Closedness by multiplication of the set of forms $(r_1, \ldots, r_{p-1}, r_p) \in W= (B \wr C_p)'$
  follows from


\begin{eqnarray} \label{prod}
  \prod\limits_{j=1}^{k}  ( r_{j1} \ldots r_{j{p-1}} r_{jp})= \prod\limits_{j=1}^{k} \prod\limits_{i=1}^{p}  r_{j_i} =  R_1 R_2 ...  R_{k} \in B ',
\end{eqnarray}

  where $r_{ji}$ is $i$-th element from the tuple number $j$,  $R_j = \prod\limits_{i=1}^{p}  r_{ji}, \,\, \, 1 \leq j \leq  k$. As it was shown above $R_j = \prod\limits_{i=1}^{p-1}  r_{ji} \in B'$. Therefore, the product (\ref{prod}) of $R_j$, $j \in \{1,...,k \}$ which is similar to the product mentioned in \cite{Meld}, has the property $R_1 R_2 ...  R_{k} \in B '$ too, because of $B '$ is subgroup.
   Thus, we get a product of form (\ref{Meld}) and the similar reasoning as above are applicable.

Let us prove the sufficiency condition. 
If the set $K$ of elements satisfying the condition of this theorem, that all products of all $r_i$, where every $i$ occurs in this forms once, belong to $B'$, then using the elements of form

 $(r_{1},e,..., e, r_{1}^{-1} )$, ... , $(e,e,...,e, r_{i}, e, r_{i}^{-1} )$, ... ,$(e,e,..., e, r_{p-1}, r_{p-1}^{-1})$, $(e,e,..., e, r_1 r_2 \cdot ...\cdot r_{p-1} )$

  we can express any element of form $(r_1, \ldots, r_{p-1}, r_p) \in W= (C_p \wr B)'$. We need to prove that in such way we can express all element from $W$ and only elements of $W$. The fact that all elements can be generated by elements of $K$ follows from randomness of choice every $r_i$, $i<p$ and the fact that equality (1) holds so construction of $r_p$ is determined.
\end{proof}

\begin{lemma} \label{form of comm_2} For any group $B$ and integer $p\geq 2, \, p \in \mathbb{N} $ if $w\in (B \wr C_p)'$ then $w$ can be represented as the following wreath recursion
\begin{align*}
w=(r_1, r_2, \ldots, r_{p-1},  r_1\m \ldots r_{p-1}\m \prod \limits ^{k}_{j=1} [f_j,g_j]),
\end{align*}
where $r_1, \ldots, r_{p-1}, f_j, g_j \in B$, and $k\leq cw(B)$.
\end{lemma}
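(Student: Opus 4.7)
The plan is to derive this lemma as essentially a reformulation of Lemma~\ref{form of comm}. Given an arbitrary $w \in (B \wr C_p)'$, I would write it in wreath recursion form as $w = (r_1, r_2, \ldots, r_{p-1}, r_p)$ with $r_i \in B$. By Lemma~\ref{form of comm}, the product of the $r_i$ in any order lies in $B'$; taking the cyclic order $r_p \cdot r_{p-1} \cdots r_1$, there exists $x \in B'$ with $r_p \cdot r_{p-1} \cdots r_1 = x$, and solving for $r_p$ gives $r_p = x \cdot r_1\m r_2\m \cdots r_{p-1}\m$.

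To put $r_p$ into the stated form (with the inverses preceding the commutator product), I would move the $B'$-factor past the product of inverses by a single conjugation. Setting $s := r_1\m \cdots r_{p-1}\m$, one rewrites
$$r_p = x s = s \cdot (s\m x s).$$
Since $B'$ is normal in $B$, the element $x' := s\m x s$ still lies in $B'$. Moreover, conjugation preserves commutator length because $s\m [f,g] s = [s\m f s, s\m g s]$, so a product of $k$ commutators conjugates to a product of $k$ commutators.

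By the very definition of the commutator width, $x \in B'$ admits a decomposition $x = \prod_{j=1}^{k}[f_j',g_j']$ with $k \le cw(B)$. Conjugating each factor by $s$ as above yields $x' = \prod_{j=1}^{k}[f_j, g_j]$ with $f_j, g_j \in B$ and the same $k \le cw(B)$. Combining the steps gives
$$r_p = r_1\m r_2\m \cdots r_{p-1}\m \prod_{j=1}^{k}[f_j,g_j],$$
which matches the statement. I do not foresee a genuine obstacle here: the substance is already contained in Lemma~\ref{form of comm}, and the only delicate point is picking the correct cyclic ordering of the $r_i$ in the first step so that the resulting $B'$-factor ends up on the right after a single conjugation, without inflating the number of commutators required.
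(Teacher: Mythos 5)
Your proof is correct and takes essentially the same route as the paper: invoke Lemma~\ref{form of comm}, solve for $r_p$, and expand the resulting element of $B'$ as a product of at most $cw(B)$ commutators. The paper simply orders the product as $r_{p-1}\cdots r_1\cdot r_p = x$, so that $r_p = r_1\m\cdots r_{p-1}\m x$ falls out immediately and your extra conjugation step (which is valid, since $B'$ is normal and conjugation preserves commutator length) is not needed.
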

\begin{proof}
According the Lemma~\ref{form of comm} we have the following wreath recursion
\begin{align*}
w=(r_1, r_2, \ldots, r_{p-1},  r_p),
\end{align*}
where $r_i\in B$ and $r_{p-1} r_{p-2} \ldots r_2 r_1  r_p = x \in B'$. Therefore, we can write $r_p = r_1\m \ldots r_{p-1}\m x$. We also can rewrite element $x\in B'$ as product of commutators $x = \prod \limits ^{k}_{j=1} [f_j,g_j]$, where $k\leq cw(B)$.
\end{proof}

\begin{lemma} \label{c_p_wr_b_elem_repr}
For any group $B$ and integer $p\geq 2, \, p \in \mathbb{N}$ if $w\in (B \wr C_p)'$ is defined by the following wreath recursion
\begin{align*}
w=(r_1, r_2, \ldots, r_{p-1},  r_1\m \ldots r_{p-1}\m [f,g]),
\end{align*}
where $r_1, \ldots, r_{p-1}, f_j, g_j \in B$, then $w$ can be represent  as commutator
\begin{align*}
w = [(a_{1,1},\ldots, a_{1,p})\sigma, (a_{2,1},\ldots, a_{2,p})],
\end{align*}
where
\begin{align*}
a_{1,i} &=  e \mbox{ for $1\leq i \leq p-1$ },\\
a_{2,1} &= (f\m)^{r_1\m \ldots r_{p-1}\m},\\
a_{2,i} &= r_{i-1} a_{2,i-1}\mbox{ for $2\leq i \leq p$},\\
a_{1,p} &= g^{a_{2,p}\m}.
\end{align*}
\end{lemma}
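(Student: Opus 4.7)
My plan is to verify the claim by direct calculation of the commutator $[\alpha,\beta]=\alpha\beta\alpha\m\beta\m$ from its wreath recursion, using the multiplication rule recalled in the Preliminaries, and then to simplify each coordinate using the recursive definitions of $a_{1,i}$ and $a_{2,i}$. Nothing deep is required beyond bookkeeping; the essential content is the identity at the last coordinate.

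First I would write $\alpha=(e,\ldots,e,a_{1,p})\sigma$ and compute its inverse. By the action of $\sigma\m$ on the base tuple one obtains $\alpha\m=(a_{1,p}\m,e,\ldots,e)\sigma\m$. Multiplying $\alpha$ by $\beta=(a_{2,1},\ldots,a_{2,p})$ shifts the entries of $\beta$ by $\sigma$, so the $i$-th coordinate of $\alpha\beta$ is $a_{2,i+1}$ for $1\le i<p$ and $a_{1,p}a_{2,1}$ for $i=p$, with top permutation $\sigma$. Right-multiplying by $\alpha\m$ cancels the top permutation and produces a factor $a_{1,p}\m$ in the last coordinate only (the other entries of $\alpha\m$ are trivial after re-indexing by $\sigma$). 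Finally, right-multiplying by $\beta\m=(a_{2,1}\m,\ldots,a_{2,p}\m)$ gives, in coordinate $i<p$, the expression $a_{2,i+1}a_{2,i}\m$, which collapses to $r_i$ by the recursion $a_{2,i+1}=r_ia_{2,i}$. This already reproduces the first $p-1$ components of $w$.

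The only delicate step is the $p$-th coordinate, where one must verify
\[
a_{1,p}\,a_{2,1}\,a_{1,p}\m\,a_{2,p}\m \;=\; r_{1}\m\cdots r_{p-1}\m\,[f,g].
\]
Setting $u:=r_{1}\m\cdots r_{p-1}\m$, iterating the recursion for $a_{2,i}$ yields $a_{2,p}=u\m a_{2,1}$; then the definitions $a_{2,1}=(f\m)^{u}=uf\m u\m$ and $a_{1,p}=g^{a_{2,p}\m}$ give $a_{2,p}=f\m u\m$, $a_{2,p}\m=uf$, and $a_{1,p}=ufgf\m u\m$. Substituting these into the left hand side and canceling the telescoping pairs $u\m u$ and $f\m f$ reduces it to $u\,fgf\m g\m=u\,[f,g]$, which is exactly $r_{1}\m\cdots r_{p-1}\m[f,g]$. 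This final algebraic collapse is the main (and really only) obstacle; once it is performed, comparing coordinate by coordinate with the wreath recursion of $w$ from Lemma \ref{form of comm_2} closes the proof.
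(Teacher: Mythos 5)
Your proposal is correct and follows essentially the same route as the paper: both compute the commutator $[(e,\ldots,e,a_{1,p})\sigma,(a_{2,1},\ldots,a_{2,p})]$ coordinate-by-coordinate via the wreath-recursion multiplication rule, obtain $a_{2,i+1}a_{2,i}\m=r_i$ in the first $p-1$ coordinates, and reduce the last coordinate $a_{1,p}a_{2,1}a_{1,p}\m a_{2,p}\m$ to $r_1\m\cdots r_{p-1}\m[f,g]$. Your direct substitution $a_{2,p}=f\m u\m$, $a_{1,p}=ufgf\m u\m$ with telescoping cancellation is just a slightly more explicit form of the paper's verification that $(a_{2,p}a_{2,1}\m)\m[(a_{2,1}\m)^{a_{2,p}a_{2,1}\m},a_{1,p}^{a_{2,p}}]$ equals the required expression.
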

\begin{proof}
Let us consider the following commutator
\begin{align*}
\kappa &= (a_{1,1},\ldots, a_{1,p})\sigma \cdot (a_{2,1},\ldots, a_{2,p}) \cdot (a_{1,p}\m,a_{1,1}\m,\ldots, a_{1,p-1}\m)\sigma\m \cdot (a_{2,1}\m,\ldots, a_{2,p}\m)\\
&= (a_{3,1}, \ldots, a_{3,p}),
\end{align*}
where
\begin{align*}
a_{3,i} = a_{1,i}a_{2,1 + (i \bmod p)}a_{1,i}\m a_{2,i}\m.
\end{align*}
At first we compute the following
\begin{align*}
a_{3,i} = a_{1,i}a_{2,i+1}a_{1,i}\m a_{2,i}\m = a_{2,i+1} a_{2,i}\m = r_{i} a_{2,i} a_{2,i}\m=  r_i, \mbox{ for $1\leq i \leq p-1$}.
\end{align*}
Then we make some transformations of $a_{3,p}$:
\begin{align*}
a_{3,p}&=a_{1,p}a_{2,1}a_{1,p}\m a_{2,p}\m\\
&=(a_{2,1} a_{2,1}\m) a_{1,p}a_{2,1}a_{1,p}\m a_{2,p}\m\\
&=a_{2,1} [a_{2,1}\m, a_{1,p}] a_{2,p}\m\\
&=a_{2,1}a_{2,p}\m a_{2,p} [a_{2,1}\m, a_{1,p}] a_{2,p}\m\\
&= (a_{2,p} a_{2,1}\m)\m  [(a_{2,1}\m)^{a_{2,p}}, a_{1,p}^{a_{2,p}}]\\
&= (a_{2,p} a_{2,1}\m)\m  [(a_{2,1}\m)^{a_{2,p} a_{2,1}\m}, a_{1,p}^{a_{2,p}}].
\end{align*}
We transform the commutator $\kappa$ to a form analogous to that of $w$:
\begin{eqnarray*}
\left\{
\begin{matrix}
a_{1,i}a_{2,i+1}a_{1,i}\m a_{2,i}\m &=& r_i, \mbox{ for $1\leq i \leq p-1$},\\
(a_{2,p} a_{2,1}\m)\m &=& r_1\m \ldots r_{p-1}\m,\\
(a_{2,1}\m)^{a_{2,p} a_{2,1}\m} &=& f,\\
a_{1,p}^{a_{2,p}} &=& g.
\end{matrix}\right.
\end{eqnarray*}
In order to prove required statement it is sufficient to find at least one solution of equations. We set the following
\begin{eqnarray*}
a_{1,i} &=  e \mbox{ for $1\leq i \leq p-1$ }.
\end{eqnarray*}
Then we have
\begin{eqnarray*}
\left\{
\begin{matrix}
a_{2,i+1} a_{2,i}\m &=& r_i, \mbox{ for $1\leq i \leq p-1$},\\
(a_{2,p} a_{2,1}\m)\m &=& r_1\m \ldots r_{p-1}\m,\\
(a_{2,1}\m)^{a_{2,p} a_{2,1}\m} &=& f,\\
a_{1,p}^{a_{2,p}} &=& g.
\end{matrix}\right.
\end{eqnarray*}
Now we can see that the form of the commutator $\kappa$ is analogous to the form of $w$.

Let us make the following notation
\begin{align*}
r' = r_{p-1} \ldots r_1.
\end{align*}
We note that from the definition of $a_{2, i}$ for $2\leq i \leq p$ it follows that
\begin{align*}
r_i = a_{2,i+1} a_{2,i}\m, \mbox{ for $1\leq i \leq p-1$}.
\end{align*}
Therefore,
\begin{align*}
r' &=  (a_{2,p} a_{2,p-1}\m) (a_{2,p-1} a_{2,p-2}\m)\ldots (a_{2,3} a_{2,2}\m) (a_{2,2} a_{2,1}\m)\\
&= a_{2,p} a_{2,1}\m.
\end{align*}
And then
\[
(a_{2,p} a_{2,1}\m)\m = (r')\m = r_1\m \ldots r_{p-1}\m.
\]
Finally let us to compute the following
\begin{align*}
(a_{2,1}\m)^{a_{2,p} a_{2,1}\m} = (((f\m)^{r_1\m \ldots r_{p-1}\m})\m)^{r'} = (f^{(r')\m})^{r'}  = f,\\
a_{1,p}^{a_{2,p}} = (g^{a_{2,p}\m})^{a_{2,p}} = g.
\end{align*}
And now we conclude that
\begin{align*}
a_{3,p} = r_1\m \ldots r_{p-1}\m [f,g].
\end{align*}
Thus, the commutator $\kappa$ is presented exactly in the similar form as $w$ has.
\end{proof}
For future using we formulate previous lemma for the case $p=2$.

\begin{corollary} \label{c_2_wr_b_elem_repr}
If $B$ is any group and $w\in (B \wr C_2)'$ is defined by the following wreath recursion
\begin{align*}
w=(r_1,  r_1\m [f,g]),
\end{align*}
where $r_1, f, g \in B$, then $w$ can be represent as commutator
\begin{align*}
w = [(e, a_{1,2})\sigma, (a_{2,1}, a_{2,2})],
\end{align*}
where
\begin{align*}
a_{2,1} &= (f\m)^{r_1\m},\\
a_{2,2} &= r_{1} a_{2,1},\\
a_{1,2} &= g^{a_{2,2}\m}.
\end{align*}
\end{corollary}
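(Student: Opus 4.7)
The plan is to obtain this as an immediate specialization of Lemma~\ref{c_p_wr_b_elem_repr} to the case $p=2$, so essentially no new work is required beyond a careful substitution. First I would note that the hypothesis of the corollary is exactly the hypothesis of the lemma when $p=2$: the tuple $(r_1,\dots,r_{p-1},r_1\m\cdots r_{p-1}\m[f,g])$ reduces to $(r_1, r_1\m[f,g])$, and the single commutator $[f,g]$ already equals $\prod_{j=1}^{k}[f_j,g_j]$ for $k=1$.

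Next I would carry out the index collapse in the conclusion of the lemma. The condition $a_{1,i}=e$ for $1\leq i\leq p-1$ becomes simply $a_{1,1}=e$, so the first factor $(a_{1,1},a_{1,2})\sigma$ takes the stated form $(e,a_{1,2})\sigma$. The base value $a_{2,1} = (f\m)^{r_1\m\cdots r_{p-1}\m}$ reduces, when $p=2$, to $a_{2,1}=(f\m)^{r_1\m}$. The recursion $a_{2,i}=r_{i-1}a_{2,i-1}$ for $2\leq i\leq p$ yields the single new value $a_{2,2}=r_1 a_{2,1}$. Finally $a_{1,p}=g^{a_{2,p}\m}$ becomes $a_{1,2}=g^{a_{2,2}\m}$. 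These are precisely the formulas asserted in the corollary.

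There is no real obstacle here, since the lemma has already done all the algebraic work; the only thing to guard against is a miscounted index at the endpoint, in particular ensuring that the ``product'' $r_1\m\cdots r_{p-1}\m$ is interpreted as the single factor $r_1\m$ when $p=2$, and that the range $1\leq i\leq p-1$ is nonempty (it consists of the single value $i=1$). With these conventions, invoking Lemma~\ref{c_p_wr_b_elem_repr} finishes the proof in one step.
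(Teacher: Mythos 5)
Your proposal is correct and matches the paper exactly: the paper also obtains this corollary as the immediate specialization of Lemma~\ref{c_p_wr_b_elem_repr} to $p=2$. Your careful tracking of the index collapse is a more explicit version of the same one-step argument.
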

The proof is immediate from  Lemma \ref{c_p_wr_b_elem_repr}  in the case where $p=2$.

\begin{lemma} \label{comm B_k}
For any group $B$ and integer $p\geq 2$ inequality
\begin{align*}
cw(B\wr C_p) \leq \max(1,cw(B))
\end{align*}
holds.
\end{lemma}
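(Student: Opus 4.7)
The plan is to combine Lemma \ref{form of comm_2} (which gives a canonical wreath-recursion normal form for elements of the derived subgroup, with a tail that carries a product of at most $cw(B)$ commutators of $B$) with Lemma \ref{c_p_wr_b_elem_repr} (which realises each single-commutator tail as one commutator in $B\wr C_p$). The strategy is to split the tail into single-commutator factors and absorb each one into a one-commutator element of $(B\wr C_p)'$.

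More concretely, I would take an arbitrary $w\in (B\wr C_p)'$. By Lemma \ref{form of comm_2} there are $r_1,\ldots,r_{p-1}\in B$ and elements $f_j,g_j\in B$ with $k\le cw(B)$ such that
\begin{equation*}
w=\bigl(r_1,\ldots,r_{p-1},\;r_1\m\cdots r_{p-1}\m\prod_{j=1}^{k}[f_j,g_j]\bigr).
\end{equation*}
Handle the case $cw(B)=0$ first: then $B'=\{e\}$, the product of commutators is trivial, and Lemma \ref{c_p_wr_b_elem_repr} applied with $f=g=e$ exhibits $w$ as a single commutator in $B\wr C_p$, so $cw(B\wr C_p)\le 1=\max(1,cw(B))$.

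For $cw(B)\ge 1$, I would define
\begin{align*}
w_1 &= \bigl(r_1,\ldots,r_{p-1},\;r_1\m\cdots r_{p-1}\m[f_1,g_1]\bigr),\\
w_j &= \bigl(e,\ldots,e,[f_j,g_j]\bigr)\qquad (2\le j\le k).
\end{align*}
Each $w_j$ has trivial top permutation, so multiplication in $B\wr C_p$ is componentwise and one checks directly that $w=w_1 w_2\cdots w_k$. Every $w_j$ lies in $(B\wr C_p)'$ because the product of its coordinates lies in $B'$ (Lemma \ref{form of comm}), and is of exactly the shape covered by Lemma \ref{c_p_wr_b_elem_repr} — with all leading $r_i$ equal to $e$ for $j\ge 2$, and with the given $r_i$ and the single commutator $[f_1,g_1]$ for $j=1$. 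Hence each $w_j$ equals one commutator in $B\wr C_p$, and $w$ is a product of $k\le cw(B)$ commutators, giving the claimed bound.

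No step is a serious obstacle, since the previous lemmas already carry the technical content; the only thing to be careful about is verifying the componentwise decomposition $w=w_1\cdots w_k$ (which is immediate once one notes the top permutations are all trivial) and ensuring that the $w_j$ for $j\ge 2$ really fall under the hypotheses of Lemma \ref{c_p_wr_b_elem_repr} (here $r_1=\cdots=r_{p-1}=e$ makes $r_1\m\cdots r_{p-1}\m[f_j,g_j]=[f_j,g_j]$, matching the required form). Combining the two cases yields $cw(B\wr C_p)\le\max(1,cw(B))$.
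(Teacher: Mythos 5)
Your proof is correct and follows essentially the same route as the paper: both split off the factor $(r_1,\ldots,r_{p-1},r_1\m\cdots r_{p-1}\m[f_1,g_1])$ and handle it via Lemma \ref{c_p_wr_b_elem_repr}, absorbing the remaining commutator tails into further single commutators. The only cosmetic difference is that the paper writes the factors for $j\ge 2$ directly as $[(e,\ldots,e,f_j),(e,\ldots,e,g_j)]$ rather than invoking Lemma \ref{c_p_wr_b_elem_repr} again, and your explicit treatment of the case $cw(B)=0$ is a welcome extra care.
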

\begin{proof}
We can represent any $w\in (B \wr C_p)'$ by Lemma~\ref{form of comm} as the following wreath recursion
\begin{align*}
w&=(r_1, r_2, \ldots, r_{p-1},  r_1\m \ldots r_{p-1}\m  \prod \limits_{j=1}^{k} [f_{j},g_{j}])\\
 &= (r_1, r_2, \ldots, r_{p-1},  r_1\m \ldots r_{p-1}\m  [f_{1},g_{1}]) \cdot \prod \limits_{j=2}^{k} [(e, \ldots, e, f_j), (e, \ldots, e, g_j)],
\end{align*}
where $r_1, \ldots, r_{p-1}, f_j, g_j \in B$, $k \leq cw(B)$. Now we can apply Lemma~\ref{c_p_wr_b_elem_repr} to the element  \newline
$(r_1, r_2, \ldots, r_{p-1},  r_1\m \ldots r_{p-1}\m  [f_{1},g_{1}])$. Lemma~\ref{c_p_wr_b_elem_repr} implies that $w$ can be represented as product of $\max(1, cw(B))$ commutators.
\end{proof}

\begin{corollary} \label{comm Cycl}
If  $W = C_{p_k} \wr \ldots \wr C_{p_1}$ then
$cw(W) =1$ for $k\geq 2$.
\end{corollary}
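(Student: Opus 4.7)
The plan is to combine Lemma~\ref{comm B_k} with a straightforward induction on $k$, together with a quick argument that $W$ is non-abelian so that $cw(W)\geq 1$.

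First I would parse $W$ using associativity of the wreath product as $W = B \wr C_{p_1}$, where $B = C_{p_k} \wr \ldots \wr C_{p_2}$. Applying Lemma~\ref{comm B_k} directly gives
\[
cw(W) \leq \max(1, cw(B)).
\]
I would then induct on $k$. For the base case $k=2$, the group $B = C_{p_2}$ is abelian, so $cw(B)=0$ and the bound yields $cw(W)\leq 1$. For the inductive step $k\geq 3$, the group $B = C_{p_k}\wr\ldots \wr C_{p_2}$ is itself an iterated wreath product of $k-1\geq 2$ cyclic groups, so by the induction hypothesis $cw(B)=1$, and again $\max(1,cw(B))=1$.

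It remains to establish the lower bound $cw(W)\geq 1$, which amounts to showing that $W$ is non-abelian (so that $W'\neq\{e\}$ and the maximum of commutator lengths over $W'$ is at least $1$). Since $k\geq 2$ and each $p_i\geq 2$, the group $W$ contains a copy of a wreath product of two non-trivial cyclic groups, which is non-abelian; hence $W'$ is non-trivial. Combining this with the upper bound yields $cw(W)=1$.

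There is no real obstacle here: the whole content lies in Lemma~\ref{comm B_k}, and the corollary is just a clean induction on top of it. The only minor point to be careful about is the convention for iteration of the wreath product (so that the outermost factor on which the lemma is applied is indeed a cyclic group $C_{p_1}$), and the trivial check that $W$ is non-abelian so the commutator width is not vacuously $0$.
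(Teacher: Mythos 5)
Your proof is correct and follows essentially the same route as the paper: decompose $W=B\wr C_{p_1}$ via associativity, apply Lemma~\ref{comm B_k} to get $cw(W)\leq\max(1,cw(B))$, induct on $k$, and note non-abelianness for the lower bound $cw(W)\geq 1$. Your version is in fact slightly cleaner in making the base case ($B$ abelian, $cw(B)=0$) and the implicit hypothesis $p_i\geq 2$ explicit.
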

\begin{proof}
If $B= C_{p_k} \wr C_{p_{k-1}}$  then taking into consideration that $cw(B)>0$ (because $C_{p_k} \wr C_{p_{k-1}}$ is not commutative group). Since Lemma \ref{comm B_k} implies that $cw(C_{p_k} \wr C_{p_{k-1}})=1$ then according to the inequality $cw(C_{p_k} \wr C_{p_{k-1}} \wr C_{p_{k-2}}) \leq \max(1,cw(B))$ from Lemma \ref{comm B_k} we obtain $cw(C_{p_k} \wr C_{p_{k-1}} \wr C_{p_{k-2}})=1$. Analogously if $W = C_{p_k} \wr \ldots \wr C_{p_1}$ and supposition of induction for $C_{p_{k}} \wr \ldots \wr C_{p_2}$ holds, then using an associativity of a permutational wreath product we obtain from the inequality of Lemma \ref{comm B_k} and the equality $cw( C_{p_k} \wr \ldots \wr C_{p_2})=1$ that $cw(W)=1$.
\end{proof}

We define our partial ordered set $M$ as the set of all finite wreath products of cyclic groups. We make of use directed set $\mathbb{N}$.
\begin{equation} \label{cwH }
{{H}_{k}}=\underset{i=1}{\overset{k}{\mathop{\wr }}}\,{{\mathcal{C}}_{{{p}_{i}}}}
\end{equation}

Moreover, it has already been proved in Corollary  \ref{comm Cycl} that each group  of the form $\underset{i=1}{\overset{k}{\mathop{\wr }}}\,{{\mathcal{C}}_{{{p}_{i}}}}$  has a  commutator  width equal to 1, i.e $cw(\underset{i=1}{\overset{k}{\mathop{\wr }}}\,{{\mathcal{C}}_{{{p}_{i}}}})=1$.   A partial order  relation will be a subgroup relationship.  Define the injective homomorphism $f_{k,k+1}$ from the $\underset{i=1}{\overset{k}{\mathop{\wr}}}\,{{\mathcal{C}}_{{{p}_{i}}}}$ into $\underset{i=1}{\overset{k+1}{\mathop{\wr }}}\,{{\mathcal{C}}_{{{p}_{i}}}}$ by mapping a generator of active group ${\mathcal{C}_{{{p}_{i}}}}$ of ${{H}_{k}}$ in a generator of active group ${\mathcal{C}_{{{p}_{i}}}}$ of ${{H}_{k+1}}$.
In more details the injective homomorphism $f_{k,k+1}$ is defined as $g \mapsto g(e,..., e)$, where a generator $g\in \underset{i=1}{\overset{k} {\mathop{\wr }}}\, {\mathcal{C}}_{{p}_{i}}$, $g(e,..., e)\in  \underset{i=1}{\overset{k+1}{\mathop{\wr }}}\, {\mathcal{C}}_{{p}_{i}}$.

Therefore this is an injective homomorphism of ${{H}_{k}}$ onto subgroup $\underset{i=1}{\overset{k}{\mathop{\wr }}}\,{{\mathcal{C}}_{{{p}_{i}}}}$ of $H_{k+1}$.

\begin{corollary} \label{comm Cycl}
The direct limit  $\underrightarrow{\lim }\underset{i=1}{\overset{k}{\mathop{\wr }}}\,{{\mathcal{C}}_{{{p}_{i}}}}$ of  direct system $\left\langle {{f}_{k,j}},\,\underset{i=1}{\overset{k}{\mathop{\wr }}}\,{{\mathcal{C}}_{{{p}_{i}}}} \right\rangle $ has commutator width 1.
\end{corollary}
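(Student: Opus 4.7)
The plan is to reduce the statement about $H_{\infty} := \varinjlim H_k$ (with $H_k = \underset{i=1}{\overset{k}{\wr}}\,\mathcal{C}_{p_i}$) to the already-established fact $cw(H_k)=1$ for $k\geq 2$ from the preceding corollary. The standard observation underlying this reduction is that, because the bonding maps $f_{k,k+1}$ are injective homomorphisms, every element of $H_{\infty}$ is realized by some element at a finite stage, and any equation involving only finitely many elements of $H_{\infty}$ holds in $H_{\infty}$ iff it holds at a sufficiently high finite stage.

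More concretely, I would first pick an arbitrary $w\in [H_{\infty},H_{\infty}]$ and write it as a finite product $w=\prod_{i=1}^{m}[x_i,y_i]$ with $x_i,y_i\in H_{\infty}$. Each of these $2m$ elements is in the image of the canonical embedding of some $H_{k_i}$ into $H_{\infty}$, so setting $k=\max_i k_i$ (and enlarging if necessary so $k\geq 2$) we may view all $x_i$ and $y_i$ as elements of $H_k$, whence the product $\prod_i [x_i,y_i]$ makes sense already inside $[H_k,H_k]$ and its image under $H_k\hookrightarrow H_{\infty}$ equals $w$. Thus every element of $[H_{\infty},H_{\infty}]$ lies in the image of $[H_k,H_k]$ for some $k\geq 2$.

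Next, I would apply the previous corollary: since $cw(H_k)=1$ for every $k\geq 2$, the preimage of $w$ inside $H_k$ can be written as a single commutator $[x,y]$ with $x,y\in H_k$. Pushing this relation forward along the injection into the direct limit gives $w=[\bar{x},\bar{y}]$ in $H_{\infty}$, so $cw(H_{\infty})\leq 1$. Since $H_{\infty}$ contains an isomorphic copy of $H_2=\mathcal{C}_{p_2}\wr\mathcal{C}_{p_1}$, which is non-abelian, the commutator subgroup $[H_{\infty},H_{\infty}]$ is nontrivial, so $cw(H_{\infty})=1$.

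The only genuine subtlety, and the step I would write out most carefully, is the compatibility of the commutator subgroup with the direct limit, i.e. the identification $[H_{\infty},H_{\infty}]=\varinjlim [H_k,H_k]$; this rests on the elementary fact that homomorphisms preserve commutators together with the defining universal property of $\varinjlim$. Once that is in place, the reduction to the previous corollary is automatic and no further calculation inside the wreath products is needed.
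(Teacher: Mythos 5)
Your proposal is correct and follows essentially the same route as the paper: both arguments rest on the observation that, since the bonding maps are injective homomorphisms, every element of the commutator subgroup of the direct limit is already realized in some finite stage $H_k$, where Corollary~\ref{comm Cycl} (the finite case) gives a single-commutator expression that pushes forward to the limit. Your write-up is in fact more careful than the paper's at the one point that matters --- justifying that a finite product of commutators of limit elements can be pulled back to a single $H_k$ by directedness --- and you also supply the lower bound $cw\geq 1$ explicitly, which the paper leaves implicit.
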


\begin{proof}
We make the transition to the direct limit in the directed system $\left\langle {{f}_{k,j}},\,\underset{i=1}{\overset{k}{\mathop{\wr }}}\,{{\mathcal{C}}_{{{p}_{i}}}} \right\rangle $  of injective mappings from chain $e\to \,\,...\,\,\to \underset{i=1}{\overset{k}{\mathop{\wr }}}\,{{\mathcal{C}}_{{{p}_{i}}}}\to \underset{i=1}{\overset{k+1}{\mathop{\wr }}}\,{{\mathcal{C}}_{{{p}_{i}}}}\to \underset{i=1}{\overset{k+2}{\mathop{\wr }}}\,{{\mathcal{C}}_{{{p}_{i}}}}\to ...$.

Since all mappings in chains were injective homomorphisms, it have a trivial kernel, so the transition to a direct limit boundary preserves the property $\rho$: that any element of commutator subgroup is commutator, because  in each group ${{H}_{k}}$ from the chain endowed by $\rho $.

The direct limit of the direct system  is denoted by $\underrightarrow{\lim }\underset{i=1}{\overset{k}{\mathop{\wr }}}\,{{\mathcal{C}}_{{{p}_{i}}}}$ and is defined as disjoint union of the  ${{H}_{k}}$'s modulo a certain equivalence relation:

$$\underrightarrow{\lim }\underset{i=1}{\overset{k}{\mathop{\wr }}}\,{{\mathcal{C}}_{{{p}_{i}}}}={}^{\coprod\limits_{k}{\underset{i=1}{\overset{k}{\mathop{\wr }}}\,{{\mathcal{C}}_{{{p}_{i}}}}}}/{}_{\sim }.$$

Since every element $g$ of $\underrightarrow{\lim }\underset{i=1}{\overset{k}{\mathop{\wr }}}\,{{\mathcal{C}}_{{{p}_{i}}}}$ coincides with some element from one of the groups ${{G}_{m}}$ of directed system, then by the injectivity of the mappings for $g$
the property  $cw(\underset{i=1}{\overset{k}{\mathop{\wr }}}\,{{\mathcal{C}}_{{{p}_{i}}}})=1$ also holds. Thus, it holds for the whole $\underrightarrow{\lim }\underset{i=1}{\overset{k}{\mathop{\wr }}}\,{{\mathcal{C}}_{{{p}_{i}}}}$.

\end{proof}



\begin{corollary} \label{cw_syl_p_s_p_k_eq_1_and_syl_p_a_p_k_eq_1}  Commutator width  $cw(Syl_p(S_{p^k})) = 1$ for prime $p$ and $k>1$ and commutator width $cw(Syl_p (A_{p^k})) = 1$ for prime $p>2$ and $k>1$.
\end{corollary}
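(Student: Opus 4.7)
The plan is to reduce both assertions to the previously established Corollary stating that $cw(\underset{i=1}{\overset{k}{\wr}}\,\mathcal{C}_{p_i}) = 1$ for $k\geq 2$. The essential input is the classical Kaloujnine description of the Sylow $p$-subgroup of the symmetric group on $p^k$ letters as the $k$-fold iterated regular wreath product of cyclic groups of order $p$, that is,
\[
Syl_p(S_{p^k}) \;\simeq\; \underset{i=1}{\overset{k}{\wr}}\,\mathcal{C}_{p}.
\]
Granting this identification, the statement $cw(Syl_p(S_{p^k})) = 1$ for $k>1$ is immediate from the earlier corollary applied with $p_1 = \ldots = p_k = p$; the hypothesis $k>1$ is only used to ensure that the wreath product is non-abelian, so that the commutator width equals $1$ rather than $0$.

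For the alternating-group case with $p>2$, the strategy is to show that the Sylow $p$-subgroups of $S_{p^k}$ and $A_{p^k}$ coincide, after which the first part applies verbatim. First I would note that $|Syl_p(S_{p^k})| = p^{(p^k-1)/(p-1)}$ is odd, so every element of $Syl_p(S_{p^k})$ is a product of cycles of odd prime-power length and is therefore an even permutation. Hence $Syl_p(S_{p^k}) \leq A_{p^k}$. Since $[S_{p^k}:A_{p^k}] = 2$ is coprime to $p$, the $p$-parts of $|S_{p^k}|$ and $|A_{p^k}|$ agree, so by order considerations $Syl_p(A_{p^k}) = Syl_p(S_{p^k})$, and the conclusion $cw(Syl_p(A_{p^k})) = 1$ follows.

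The main conceptual point, and the reason the hypothesis $p>2$ appears in the second half, is precisely that this coincidence of Sylow subgroups fails for $p=2$: the group $G_k = Syl_2(A_{2^k})$ is a proper index-$2$ subgroup of $B_k = Syl_2(S_{2^k})$, and its commutator width cannot be read off directly from the iterated wreath-product corollary. That case requires the separate recursive description of $G_k$ introduced in the Preliminaries and is treated elsewhere in the paper. Apart from invoking Kaloujnine's theorem, no new computation is needed here; the only obstacle is the routine verification of the order and parity arguments above.
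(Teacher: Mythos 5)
Your proposal is correct and follows essentially the same route as the paper: invoke Kaloujnine's identification $Syl_p(S_{p^k})\simeq \underset{i=1}{\overset{k}{\wr}}\,C_p$ and apply the earlier corollary on commutator width of iterated wreath products of cyclic groups, then transfer the result to $A_{p^k}$ via the coincidence of Sylow $p$-subgroups for odd $p$. The only difference is that you supply the short parity-and-order argument for $Syl_p(S_{p^k})=Syl_p(A_{p^k})$ when $p>2$, whereas the paper simply cites this fact from the literature; your verification is correct.
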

\begin{proof}
Since $Syl_p(S_{p^k}) \simeq  \stackrel{k}{ \underset{\text{\it i=1}}{\wr }}C_p$  (see \cite{Kal, Paw, Sk} 
 then $cw(Syl_p(S_{p^k}))=1$. As well known $Syl_p S_{p^k} \simeq Syl_p A_{p^k} $ in case $p>2$  (see \cite{Dm}, then $cw(Syl_p(A_{p^k}))=1$ if $p>2$.
\end{proof}

The following Lemma gives us a criteria of belonging an element from the group $Syl_2 S_{2^k}$ to $(Syl_2 S_{2^k})'$. 

\begin{lemma} \label{comm B_k old} An element $g \in B_k$ belongs to commutator subgroup $B'_k$ iff $g$ has an even index on ${{X}^{l}}$ for all $0\leq l < k$. Where $B_k \simeq Syl_2 S_{2^k}$.
 \end{lemma}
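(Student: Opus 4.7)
The plan is to isolate, for each level $0 \le l < k$, the \emph{parity function} $\varepsilon_l \colon B_k \to C_2$ defined by $\varepsilon_l(g) = \sum_{i} s_{l,i}(g) \pmod 2$, and then show that $B_k' = \bigcap_{l=0}^{k-1} \ker \varepsilon_l$. The first task is to verify that each $\varepsilon_l$ is a group homomorphism. Using the wreath-recursion multiplication rule $gh = (g_1 h_{\sigma_g(1)}, g_2 h_{\sigma_g(2)}) \sigma_g \sigma_h$ iteratively, one sees that the multiset of states at level $l$ of the product $gh$ is a permutation (dictated by the action of internal vertex permutations of $g$ on $X^l$) of the concatenation of the multisets of states of $g$ and of $h$ at level $l$. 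Hence their parities add modulo $2$, so $\varepsilon_l$ is a homomorphism into an abelian group; in particular $\varepsilon_l$ vanishes on every commutator, which gives the necessity direction.

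For sufficiency I would argue by induction on $k$. The base case $k=1$ is immediate, since $B_1 = C_2$ is abelian and the condition $\varepsilon_0(g) = 0$ forces $g = e$. Assume the lemma for $B_{k-1}$, and let $g \in B_k$ satisfy $\varepsilon_l(g) = 0$ for every $0 \le l < k$. The vanishing of $\varepsilon_0(g)$ forces the root vertex permutation to be trivial, so $g = (g_1, g_2)$ with $g_1, g_2 \in B_{k-1}$. For $1 \le l < k$ we have the identity $\varepsilon_l(g) = \varepsilon_{l-1}(g_1) + \varepsilon_{l-1}(g_2)$, and combining it with the homomorphism property of $\varepsilon_{l-1}$ on $B_{k-1}$ yields $\varepsilon_{l-1}(g_1 g_2) = 0$ for every $0 \le l-1 < k-1$. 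The inductive hypothesis then gives $g_1 g_2 \in B_{k-1}'$, and Lemma~\ref{form of comm} applied with $B = B_{k-1}$ and $p = 2$ concludes $g \in B_k'$.

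The only delicate step is the homomorphism verification for $\varepsilon_l$: one has to keep track of how the states on the $l$-th level of a product arise from those of its factors, observing that the internal vertex permutations of $g$ only reindex, and never create or destroy, active vertices at deeper levels. Once that bookkeeping is in hand, both implications follow cleanly, and the induction needs nothing beyond the description of $(B \wr C_2)'$ already established in Lemma~\ref{form of comm}.
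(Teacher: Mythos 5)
Your proof is correct, and the sufficiency half is exactly the paper's argument: induction on $k$, observing that even index at level $0$ forces $g=(g_1,g_2)$ with trivial root permutation, that the level-$l$ index of $g$ is the sum of the level-$(l-1)$ indices of $g_1$ and $g_2$, and then invoking Lemma~\ref{form of comm} with $B=B_{k-1}$, $p=2$ once $g_1g_2\in B_{k-1}'$ is known. Where you genuinely diverge is the necessity direction. The paper proves it by an induction over levels with explicit conjugation computations (showing $\alpha\beta\alpha^{-1}$ and $\beta^{-1}$ have equal level indices, so their product has even index, and that parities of indices multiply out independently of the higher-level action), and the closure of this property under products of commutators is handled rather informally. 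You instead package the same parity-additivity as the statement that each $\varepsilon_l\colon B_k\to C_2$ is a homomorphism, whence $B_k'\subseteq\ker\varepsilon_l$ immediately; this is cleaner, makes the "closed under products" issue disappear, and as a bonus exhibits $\bigcap_l\ker\varepsilon_l$ as the kernel of a surjection onto $C_2^k$, i.e.\ it identifies the abelianization of $B_k$. One small imprecision to fix in the write-up: the level-$l$ vertex permutations of $gh$ are not a reindexed concatenation of those of $g$ and $h$ (there are only $2^l$ of them, not $2^{l+1}$); rather, the vertex permutation of $gh$ at $v$ is the product of that of $g$ at $v$ with that of $h$ at $g(v)$, and since these live in the abelian group $S_2$ and $v\mapsto g(v)$ permutes the level, the parities still add. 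With that sentence corrected, the homomorphism verification is airtight and the whole argument goes through.
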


\begin{proof}
Let us prove the necessity by induction by a number of level $l$ and index of $g$ on $X^l$.
 We first show that our statement for base of the induction is true. Actually, if $\alpha, \beta \in B_{0}$ then $(\alpha \beta {{\alpha }^{-1}}) \beta^{-1}$ determines a trivial v.p. on $X^{0}$. If $\alpha, \beta\in B_{1}$ and $\beta$ has an odd index on $X^1$, then $(\alpha \beta {{\alpha }^{-1}})$ and $\beta^{-1}$ have the same index on $X^1$. Consequently, in this case an index of the product $(\alpha \beta {{\alpha }^{-1}}) \beta^{-1}$ can be 0 or 2. Case where $\alpha, \beta \in B_{1}$ and has even index on $X^1$, needs no proof, because the product and the sum of even numbers is an even number.

To finish the proof it suffices to assume that for $B_{l-1}$ statement holds and prove that it holds for $B_l$.
Let $\alpha, \beta$ are an arbitrary automorphisms from $Aut X^{[k]}$ and $\beta$ has index $x$ on $X^{l}, \, l<k$, where $0 \leq x \leq 2^{l} $.
   A conjugation of an automorphism $\beta $  by arbitrary $\alpha \in Aut{{X}^{[k]}}$ gives us arbitrary permutations of $X^l$ where $\beta $ has active v.p.

 Thus following product $(\alpha \beta {{\alpha }^{-1}}) \beta ^{-1}$ admits all possible even indexes on $X^l,  l<k$ from 0 to $2x$.

  Let us present $B_k$ as $B_k=B_l \wr B_{k-l}$, so elements $\alpha, \beta$ can be presented in form of wreath recursion $\alpha = (h_{1},...,h_{2^l })\pi_1, \,  \beta = (f_{1},...,f_{2^l })\pi_2$, $h_{i}, f_{i} \in B_{k-l} ,\  0<i \leq 2^l$ and $h_{i}, f_{j}$ corresponds to sections of automorphism in vertices of $X^{l+1}$. 

Actually, the parity of this index are formed independently of the action of
$Aut X^{[l]}$ on $X^l$. So this index forms as a result of multiplying of elements of commutator presented as wreath recursion $(\alpha \beta \alpha^{-1}) \cdot \beta ^{-1} = (h_{1},...,h_{2^l})\pi_1 \cdot (f_{1},...,f_{2^l })\pi_2= (h_{1},...,h_{2^l}) (f_{\pi_1(1)},...,f_{\pi_1(2^l)})\pi_1 \pi_2 $, where $h_{i}, f_{j} \in {B}_{k-l}$, $l<k$.
Let there are $x$ automorphisms, that have active vertex at the level $X^l$, among $h_i$.
Analogous automorphisms $h_{i}$ has number of active v.p. equal to $x$. As a result of multiplication we have automorphism with index $2i:$ $0 \leq 2i \leq 2x$ on $X^l$.
Consequently, commutator $[\alpha, \beta]$ has an arbitrary even indexes on $X^m$, $m<l$ and we showed by induction  that it has even index on $X^l$.

Let us prove this Lemma by induction on level $k$. Let us to suppose that we prove current Lemma (both sufficiency and necessity) for $k-1$. Then we rewrite element $g \in B_k$ with wreath recursion
\[
g = (g_1, g_2)\sigma^i,
\]
where $i \in \{0, 1\}$.

Now  we consider sufficiency.

Let $g \in B_k$ and $g$ has all even indexes on $X^j$ $0 \leq j < k$ we need to show that $g \in B'_k$. According to condition of this Lemma $g_1 g_2$ has even indexes. An element $g$ has form $g=(g_1, g_2)$, where $g_1, g_2 \in B_{k-1}$, and products $g_1 g_2 = h \in B'_{k-1}$ because $h \in B_{k-1}$ and for $B_{k-1}$ induction assumption holds. Therefore, all products of form $g_1 g_2$ indicated in formula \ref{Meld} belongs to $B'_{k-1}$. Hence, from Lemma \ref{form of comm} follows that $g= (g_1, g_2 ) \in B'_k$.
\end{proof}

An automorphisms group of the subgroup ${C_{2}^{{{2}^{k-1}-1}}}$ is based on permutations of copies of $C_2$. Orders of $\stackrel{k-1}{ \underset{\text{\it i=1}}{\wr } }C_2  $ and ${C_{2}^{{{2}^{k-1}-1}}}$ are equals. A homomorphism from $\stackrel{k-1}{ \underset{\text{\it i=1}}{\wr } }C_2  $ into $Aut({C_{2}^{{{2}^{k-1}-1}}})$ is injective because a kernel of action $\stackrel{k-1}{ \underset{\text{\it i=1}}{\wr } }C_2  $ on ${C_{2}^{{{2}^{k-1}-1}}}$ is trivial, action is effective. The group $G_k$ is a proper subgroup of index 2 in the group $\stackrel{k}{ \underset{\text{\it i=1}}{\wr } }C_2  $ \cite{Dm, SkIrred, Sk }.
The following theorem can be used for proving structural property of Sylow subgroups.
\begin{theorem} \label{max}
\textbf{A maximal 2-subgroup}  of $Aut{{X}^{\left[ k \right]}}$ acting by even permutations on ${{X}^{k}}$ has the structure of the semidirect product $G_k \simeq  B_{k-1} \ltimes W_{k-1} $ and is isomorphic to $Syl_2A_{2^k}$. Also $G_k < B_k$.
\end{theorem}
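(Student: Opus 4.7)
Here is my plan. I will exhibit $G_k$ as the kernel of the sign homomorphism ${\rm sgn}\colon B_k\to\{\pm1\}$ coming from the action of $B_k=\Aut X^{[k]}$ on the leaves $X^k$, and then read off the semidirect product structure from an alternate ``bottom-up'' decomposition of the iterated wreath product.

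Step one is the decomposition $B_k\simeq C_2^{2^{k-1}}\rtimes B_{k-1}$, in which $C_2^{2^{k-1}}$ is the base subgroup consisting of those elements whose only nontrivial vertex permutations sit at the penultimate level $X^{k-1}$---equivalently, independent transpositions of sibling leaves---while $B_{k-1}$ is embedded as the lift of $\Aut X^{[k-1]}$, extending each such automorphism identically on the last coordinate. The action of $B_{k-1}$ on $C_2^{2^{k-1}}$ is the coordinate permutation inherited from its action on $X^{k-1}$. Order counting $2^{2^{k-1}}\cdot 2^{2^{k-1}-1}=2^{2^k-1}=|B_k|$, together with normality of the base, confirms this realizes all of $B_k$.

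Step two is a sign computation for $(v,g)\in C_2^{2^{k-1}}\rtimes B_{k-1}$ acting on $X^k$. The lifted $B_{k-1}$-part never touches the last coordinate, so every cycle of $g$ on $X^{k-1}$ of length $\ell$ lifts to two cycles of length $\ell$ on $X^k$; hence its sign is always $+1$. The base part is a disjoint product of exactly $|v|$ transpositions on sibling pairs, so its sign is $(-1)^{|v|}$. Therefore ${\rm sgn}(v,g)=(-1)^{|v|}$, and
\[
G_k=\ker{\rm sgn}=\{(v,g):|v|\text{ is even}\}=W_{k-1}\rtimes B_{k-1},
\]
where $W_{k-1}$ is the even-weight subgroup of $C_2^{2^{k-1}}$. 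Even weight is preserved by coordinate permutations, so $W_{k-1}$ is $B_{k-1}$-invariant, and $W_{k-1}\simeq C_2^{2^{k-1}-1}$ matches the elementary abelian group singled out in the paragraph preceding the theorem.

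Step three is identification with the Sylow subgroup and the strict inclusion. The order count $|G_k|=|W_{k-1}|\cdot|B_{k-1}|=2^{2^{k-1}-1}\cdot 2^{2^{k-1}-1}=2^{2^k-2}$ equals the 2-part of $|A_{2^k}|=(2^k)!/2$, since the 2-adic valuation of $(2^k)!$ is $2^k-1$. Thus $G_k$ is a 2-subgroup of $A_{2^k}$ of Sylow order, hence $G_k\simeq {\rm Syl}_2 A_{2^k}$; any maximal 2-subgroup of $B_k$ whose elements are all even permutations must coincide with $\ker{\rm sgn}=G_k$, because this kernel already has index $2$ in $B_k$. The strict inclusion $G_k<B_k$ is immediate, since any single penultimate-level transposition lies in $B_k$ but has odd sign. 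The main obstacle will be making the ``bottom-up'' decomposition $B_k\simeq C_2^{2^{k-1}}\rtimes B_{k-1}$ and the sign computation for the lifted $B_{k-1}$-part completely airtight; once those are verified, the isomorphism type and the strict inclusion are routine bookkeeping.
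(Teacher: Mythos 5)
Your argument is correct and complete. Note that the paper itself does not prove Theorem \ref{max} at all --- it only cites \cite{SkIrred} --- so you have supplied a self-contained proof where the text merely defers to an external reference. Your route is the standard one and matches the structure the paper takes for granted elsewhere: the bottom-up splitting $B_k\simeq C_2^{2^{k-1}}\rtimes B_{k-1}$ (base $=$ kernel of the restriction $\Aut X^{[k]}\to \Aut X^{[k-1]}$, complement $=$ the lift acting trivially on the last coordinate), the observation that the lifted part doubles every cycle and hence is even while the base contributes sign $(-1)^{|v|}$, and the Legendre-formula count $\nu_2((2^k)!)=2^k-1$ showing $|\ker\mathrm{sgn}|=2^{2^k-2}$ equals the order of a Sylow $2$-subgroup of $A_{2^k}$. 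All three steps check out, including the remark that ``maximal $2$-subgroup acting by even permutations'' is unambiguous because every even subgroup of $B_k$ lies in $\ker\mathrm{sgn}$, which already has index $2$. The only cosmetic mismatch is that you write $W_{k-1}\rtimes B_{k-1}$ where the theorem writes $B_{k-1}\ltimes W_{k-1}$; these denote the same group, and your identification of $W_{k-1}$ with the even-weight subgroup $C_2^{2^{k-1}-1}$ agrees with the paragraph preceding the theorem.
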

This theorem is proven by the author in \cite{SkIrred}.
\begin{proposition}\label{B_k_criteria}
An element $(g_1, g_2)\sigma^i$, $i \in \{0, 1\} $ of wreath power $\stackrel{k}{ \underset{\text{\it i=1}}{\wr } }C_2  $ belongs to its subgroup $ G_k$, iff $g_1g_2 \in G_{k-1}$.
\end{proposition}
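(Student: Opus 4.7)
The plan is to reduce the statement to a parity count on $X^k$, using Theorem~\ref{max}, which identifies $G_k$ with the subgroup of $B_k$ whose elements act as even permutations on the leaf set $X^k$ (equivalently, $G_k\simeq Syl_2 A_{2^k}$). I would argue by induction on $k$, applying this characterisation at each level, and take the statement itself as the inductive hypothesis at level $k-1$.

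First I would compute the parity of $(g_1,g_2)\sigma^i$ as a permutation of $X^k$. Each component $g_j$ acts on a single copy of $X^{k-1}$ sitting inside $X^k$ and trivially on the other, so the sign of $g_j$ on $X^k$ coincides with its sign on $X^{k-1}$. The twist $\sigma$ interchanges the two level-$1$ subtrees, hence is a product of $2^{k-1}$ disjoint transpositions; for $k\geq 2$ this number is even, so $\sigma$ contributes trivially to the parity. Consequently the sign of $(g_1,g_2)\sigma^i$ on $X^k$ equals $\operatorname{sgn}(g_1)\operatorname{sgn}(g_2)=\operatorname{sgn}(g_1g_2)$, with the last equality being the homomorphism property of the sign on $X^{k-1}$.

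Applying the characterisation of $G_k$ on the left and the inductive characterisation of $G_{k-1}$ on the right, the value just computed is trivial iff $(g_1,g_2)\sigma^i\in G_k$, and it is also trivial iff $g_1g_2\in G_{k-1}$, which yields the desired equivalence. The main obstacle will be confirming the parity of $\sigma$ and checking the compatibility of the two notions of sign for $g_j$ (on $X^k$ versus on $X^{k-1}$); these are routine but error-prone bookkeeping steps, and the argument relies crucially on $k\geq 2$, since for $k=1$ the shift $\sigma$ is odd and the recursion would fail. The base case $k=2$ is handled by direct inspection: $G_1=\langle e\rangle$ forces $g_1g_2=e\in G_1$, while $(e,e)\sigma^i$ is automatically an even permutation of $X^2$.
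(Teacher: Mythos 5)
Your argument is correct and follows essentially the same route as the paper: both invoke Theorem~\ref{max} to identify $G_k$ (and $G_{k-1}$) with the elements acting evenly on the leaf level, and then reduce membership to a parity computation for $(g_1,g_2)\sigma^i$ on $X^k$, the paper phrasing it via the parity of the index on $X^{k-1}$ and you via the sign homomorphism. Your sign bookkeeping (evenness of $\sigma$ for $k\geq 2$, $\operatorname{sgn}(g_1)\operatorname{sgn}(g_2)=\operatorname{sgn}(g_1g_2)$) is sound and in fact more explicit than the paper's version.
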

\begin{proof}
This fact follows from the structure of elements of $G_k$ described Theorem \ref{max} in and the construction of wreath recursion. Indeed, due to the structure of elements of $G_k$ described in Theorem \ref{max}, we have an action on $X^k$ by an even permutations because the subgroup $W_{k-1}$, containing an even number of transposition, acts on $X^k$ only by even permutation. The condition $g_1g_2 \in G_{k-1}$ on states $g_1, g_2$ of automorphism $g=(g_1, g_2) \sigma^{i} $  is equivalent to conditions that index of $g$ on $X^{k-1}$ is even but this condition equivalent to the condition that $g$ acting on $X^k$ by even permutation.
\end{proof}

\begin{lemma} \label{comm_rel}
The subgroup  $G_{k}^{'}$ has an even index of any level of  ${{X}^{\left[ k \right]}}$.
\end{lemma}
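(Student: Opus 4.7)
The plan is to derive the lemma as a one-line consequence of Lemma \ref{comm B_k old} combined with the inclusion $G_k < B_k$ that is recorded just before Theorem \ref{max}. First I would observe that the formation of the derived subgroup is monotone: since $G_k$ is a subgroup of $B_k$, we automatically have $G_k' \leq B_k'$. Hence every element of $G_k'$ is in particular an element of $B_k'$.

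Next I would invoke Lemma \ref{comm B_k old}, which characterises membership in $B_k'$ by the parity of the index $\sum_i s_{l,i}(g)$ on each level $X^l$ for $0 \leq l < k$: an element of $B_k$ lies in $B_k'$ iff all of these indices are even. Applied to an arbitrary $g \in G_k' \subseteq B_k'$, this yields that $g$ has an even index on every level $X^l$, $0 \leq l < k$, of the tree $X^{[k]}$, which is exactly the statement of the current lemma. (The level $X^k$ consists of leaves and carries no vertex permutations, so no further verification is needed there.)

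I do not anticipate any real obstacle here; the only thing to be careful about is matching the definition of ``index on level $X^l$'' used in the statement with the one proved in Lemma \ref{comm B_k old} — both count the number of active vertex permutations $s_{l,i}(\alpha)$ at level $l$, so no reinterpretation is required. As a consistency check one may note that Proposition \ref{B_k_criteria} already forces every element of $G_k$ (not just of $G_k'$) to have an even index on the top level $X^{k-1}$, since acting by even permutations on $X^k$ is equivalent to having an even number of active v.p.\ at level $k-1$; this is compatible with, and weaker than, what we just proved for $G_k'$ on all levels simultaneously.
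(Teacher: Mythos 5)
Your proposal is correct and follows essentially the same route as the paper: the paper's own (very terse) justification also rests on Lemma \ref{comm B_k old} together with the containment of $G_k$ in $B_k$, so that $G_k' \subseteq B_k'$ and the parity characterisation of $B_k'$ applies. You merely make explicit the monotonicity step $G_k \leq B_k \Rightarrow G_k' \leq B_k'$ that the paper leaves implicit, which is a harmless (indeed welcome) clarification.
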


According to Lemma  \ref{comm B_k old} and  Property  \ref{B_k_criteria} a number of active vertices index of any level of  ${{X}^{\left[ k \right]}}$  is even. Therefore, equality
\begin{equation} \label{prod_l}
\prod\limits_{j=1}^{{{2}^{i-1}}}{{{g}_{ij}}}=\prod\limits_{j={{2}^{i-1}}+1}^{{{2}^{i}}}{{{g}_{ij}}}		
\end{equation}

is true. In case $i=k-1$ there are the even number of transpositions on each of last levels of ${{v}_{11}}{{X}^{\left[ k-2 \right]}}$  and of  ${{v}_{12}}{{X}^{\left[ k-2 \right]}}$. Therefore the following condition

\begin{equation} \label{prod_k}
\prod\limits_{j=1}^{{{2}^{k-2}}}{{{g}_{k-1j}}}=\prod\limits_{j={{2}^{k-2}}+1}^{{{2}^{k-1}}}{{{g}_{k-1j}}}=e	
\end{equation}
holds.

\begin{corollary}
 The rooted permutation in ${{v}_{0}}$ of any automorphism from ${{G}_{k}}'$  is always trivial also ${{g}_{11}}={{g}_{12}}$.
\end{corollary}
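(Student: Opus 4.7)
The plan is to deduce both parts of the Corollary directly from Lemma~\ref{comm_rel}, specialized to the top two levels of the tree $X^{[k]}$. Let $g \in G_k'$ be arbitrary. By Lemma~\ref{comm_rel}, the index (i.e., the number of active v.p.'s) of $g$ on the level $X^l$ is even for every $0 \leq l < k$.

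For the first assertion, take $l = 0$. The level $X^0 = \{v_0\}$ consists of the single root vertex. The index of $g$ at $X^0$ is therefore either $0$ or $1$, and evenness forces it to be $0$. Hence $g|_{X^{[1]}}$ is trivial at $v_0$, i.e., the rooted permutation of $g$ at $v_0$ is the identity. This is exactly the case $i = 0$ of the balance equality \eqref{prod_l} (both "halves'' of $X^0$ are empty and the empty products collapse to $e$).

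For the second assertion, take $l = 1$. Now $X^1 = \{v_{11}, v_{12}\}$ contains exactly two vertices, so the even-index condition reads $s_{11}(g) + s_{12}(g) \in \{0, 2\}$; in either case $s_{11}(g) = s_{12}(g)$. This is precisely the case $i = 1$ of \eqref{prod_l}, in which both products on the left and right collapse to a single factor $g_{11}$ and $g_{12}$ respectively, yielding $g_{11} = g_{12}$ (at the level of vertex permutations at $v_{11}$ and $v_{12}$).

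No real obstacle is expected: the Corollary is merely the specialization of Lemma~\ref{comm_rel} to the two shallowest levels of the tree, where the level has so few vertices that the evenness of the total index has an especially transparent consequence. The only thing to be careful about is the reading of the equation in \eqref{prod_l} at the level of vertex permutations, which matches the usage in the companion equation \eqref{prod_k}.
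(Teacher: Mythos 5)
Your proposal is correct and follows essentially the same route as the paper: both arguments read the Corollary off Lemma~\ref{comm_rel}, getting triviality at $v_0$ from the evenness of the index on the one-vertex level $X^0$ and the equality $g_{11}=g_{12}$ from the specialization of~(\ref{prod_l}) to $i=1$. Your version merely spells out the parity bookkeeping more explicitly than the paper does.
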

\begin{proof}
The proof is immediate follows from Lemma \ref{comm_rel} in particular  ${{g}_{11}}={{g}_{12}}$ immediately follows from the equality  (\ref{prod_l}). The triviality of v.p. in ${{v}_{0}}$ implies from the fact of level order permutation parity in сommutant which implies from parity of level index.
\end{proof}

Recall that a subgroup $A$ of direct product is called subdirect product of groups ${{H}_{i}}$ if projection of $A$ on any subgroup ${{H}_{i}}$ coincides with the  subgroup ${{H}_{i}}$ \cite{Kar}.
We denote by $\bot $ operation of even subdirect product which  admits all  possible tuples of active vertices  from multipliers of form ${{G}_{k-1}}$ with condition of parity of transpostions number in the product ${{G}_{k-1}}\bot {{G}_{k-1}}$ on any level ${{X}^{l}},\,l>k-1$.

As it was said above the subgroup ${{G}_{k-1}}$ is isomorphic copy of $Sy{{l}_{2}}{{A}_{{{2}^{k-1}}}}$ in the group $Aut{{X}^{[k-1]}}$.

\begin{theorem} \label{comm_str} The group ${{G}_{k}}'$  has a structure ${{G}_{k}}'\simeq {{G}_{k-1}}~\bot {{G}_{k-1}}$.
\end{theorem}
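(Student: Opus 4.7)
The strategy is a double-inclusion argument for $G_k' = G_{k-1} \bot G_{k-1}$, carried out by induction on $k$. The base $k=2$ is immediate since $G_2 \cong V_4$ is abelian, yielding $G_2' = \{e\} = G_1 \bot G_1$. In what follows $p_l(h) \in \mathbb{Z}/2$ denotes the parity of the number of active vertex permutations of $h$ at level $l$.

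For the forward inclusion $G_k' \subseteq G_{k-1} \bot G_{k-1}$, I take $g \in G_k'$. Since $G_k \leq B_k$ we have $g \in B_k'$, so Lemma~\ref{comm B_k old} forces $g$ to have even index at every level of $X^{[k]}$, and the Corollary following Lemma~\ref{comm_rel} gives trivial root v.p., whence $g = (g_1, g_2)$ with $g_1, g_2 \in B_{k-1}$; the symmetry equalities~(\ref{prod_l}) then translate into $p_{i-1}(g_1) \equiv p_{i-1}(g_2) \pmod{2}$ at every level $i \geq 1$, i.e.\ $g_1$ and $g_2$ share a common parity profile. To upgrade $g_1, g_2$ from $B_{k-1}$ to $G_{k-1}$ I introduce the homomorphism $\varphi \colon G_k \to \mathbb{Z}/2$ defined by $\varphi\bigl((g_1, g_2)\sigma^i\bigr) = p_{k-2}(g_1)$. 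The constraint $g_1 g_2 \in G_{k-1}$ for $G_k$-elements forces $p_{k-2}(g_1) = p_{k-2}(g_2)$, and a direct check on the wreath-recursion product rule shows that $\varphi$ is multiplicative. Since $\varphi$ vanishes on commutators, $g_1 \in G_{k-1}$, and symmetrically $g_2 \in G_{k-1}$, placing $g$ in $G_{k-1} \bot G_{k-1}$.

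For the reverse inclusion I use an order-comparison argument. Theorem~\ref{max} presents $G_k \cong B_{k-1} \ltimes W_{k-1}$, so
\[
G_k^{\mathrm{ab}} \cong W_{k-1}/[W_{k-1},B_{k-1}] \;\times\; B_{k-1}^{\mathrm{ab}},
\]
and induction on the wreath-product abelianization gives $|B_{k-1}^{\mathrm{ab}}| = 2^{k-1}$. Assuming the key claim $|W_{k-1}/[W_{k-1},B_{k-1}]| = 2$, one obtains $|G_k^{\mathrm{ab}}| = 2^k$ and hence $|G_k'| = 2^{2^k - 2 - k}$. On the other hand $|G_{k-1} \bot G_{k-1}| = |G_{k-1}| \cdot |B_{k-1}'| = 2^{2^{k-1} - 2} \cdot 2^{2^{k-1} - k} = 2^{2^k - 2 - k}$, matching $|G_k'|$, which combined with the forward inclusion forces set-equality.

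The principal obstacle is the coinvariant computation $|W_{k-1}/[W_{k-1}, B_{k-1}]| = 2$. Transitivity of the $B_{k-1}$-action on the $2^{k-1}$ coordinates of the ambient $\mathbb{F}_2^{2^{k-1}}$ collapses the orbits of pair-generators $e_i + e_j$ in the coinvariants, yielding the upper bound; the matching lower bound requires exhibiting an explicit nontrivial class modulo $[W_{k-1}, B_{k-1}]$, and I expect to establish it inductively via the recursive wreath structure $B_{k-1} = B_{k-2} \wr C_2$.
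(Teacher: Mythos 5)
Your overall strategy is sound and, on the harder half, genuinely different from the paper's. The paper obtains the reverse inclusion from Lemma~\ref{comm}, which directly constructs commutators realizing every admissible parity pattern, whereas you replace that construction by an order count via the abelianization of $G_k \simeq B_{k-1}\ltimes W_{k-1}$. Your forward inclusion is fine: the parity map $\varphi\bigl((g_1,g_2)\sigma^i\bigr)=p_{k-2}(g_1)$ really is multiplicative on $G_k$ precisely because the defining constraint $g_1g_2\in G_{k-1}$ forces $p_{k-2}(h_1)=p_{k-2}(h_2)$, and your bookkeeping ($|B_{k-1}^{\mathrm{ab}}|=2^{k-1}$ and $|G_{k-1}\bot G_{k-1}|=|G_{k-1}|\cdot|B_{k-1}'|=2^{2^k-2-k}$) checks out against the description of $\bot$ implicit in Proposition~\ref{L_k_comm_criteria}.

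However, the argument has a genuine gap at exactly the point you flag: the claim $|W_{k-1}/[W_{k-1},B_{k-1}]|=2$ is asserted, not proved, and the entire reverse inclusion stands or falls with it. Moreover, the sketch you give for the upper bound does not work as stated: $B_{k-1}$ is transitive but far from $2$-transitive on the $2^{k-1}$ leaves, so transitivity does not collapse the pair-generators $e_i+e_j$ to a single class --- the pairs $\{i,j\}$ fall into $k-1$ distinct orbits indexed by the level of the least common ancestor of $i$ and $j$, and transitivity only identifies classes within one orbit. What actually rescues the computation is the relation $e_i+e_j=(e_i+e_l)+(e_l+e_j)$ inside $W_{k-1}$: taking $l$ a sibling of $i$ shows the sibling-orbit class is trivial in the coinvariants and that the remaining orbit classes all coincide, which gives the upper bound $2$. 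For the lower bound, note that every generator $w+b(w)$ of $[W_{k-1},B_{k-1}]$ has even weight on each half $X_1$, $X_2$ of the leaf set (since $b$ either preserves or swaps the halves and $w$ has even total weight), so a vector of odd weight on $X_1$, e.g.\ $e_1+e_{2^{k-2}+1}$, survives in the quotient. Until this computation is supplied, your argument establishes only the inclusion $G_k'\subseteq G_{k-1}\bot G_{k-1}$, which the paper already has from Lemma~\ref{comm_rel}.
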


\begin{proof}
Relation (\ref{prod_l}) from Lemma \ref{comm_rel} implies the parity of the level index $I{{n}_{l}}(g)$,
	Conversely the operation $\bot $ admits all  possible tuples of transpositions from multipliers of form ${{G}_{k-1}}$ with condition of parity of transpostions number in the product ${{G}_{k-1}}\bot \,{{G}_{k-1}}$ on any level  ${{X}^{l}},\,l>k-1$. Since parity of transpositions in both factors is the same and so condition (\ref{prod_l}) is satisfied.
\end{proof}

Let us denote by ${{s}_{ij}}$  state of automorphism $G''$ in vertex ${{v}_{ij}}$. Recall that any automorphism of  $g\in Aut{{X}^{[n]}}$  can be uniquely represented as  $g=\left( {{g}_{x}},\,\,x\in X \right)\pi $ \cite{Lav}.  We find the  commutator $\left[ g,\,h \right]$ in the form of the wreath recursion $({{g}_{21}},\,{{g}_{22}},{{g}_{23}},\,{{g}_{24}})\pi $, where $\pi $ is a rooted permutation of first level states.

The portraits of automorphisms $\theta$ on level $X^{k-1}$ can be characterized by the sequence $(s_1, s_2, ... , s_{2^{k-1}})$, $s_i \in {0, 1}$.
The set of vertices of $X^{k-1}$ can be disjoint into subsets $X_1$ and $X_2$.
Let $X_1=\{v_{k-1,1}, v_{k-1,2},..., v_{k-1,2^{k-2}} \} $ and $X_2=\{v_{k-1,2^{k-2}+1}, ..., v_{k-1,2^{k-1}} \}$.

We call a distance structure $\rho_l(\theta)$ of $\theta $ a tuple of distances between its active vertices from $X^l$.
Let group $Sy{{l}_{2}}{{A}_{{{2}^{k}}}}$ acts on $X^{[k]}$.

\begin{corollary} \label{states second comm}  If $g,h\in G'$ then  states of $s=[g,\,\,h]\in G''\left| _{{{X}^{\left[ 3 \right]}}} \right.$ in vertices of ${{X}^{2}}$  comply with the equalities \[{{s}_{21}}={{s}_{22}},\,\,\,\,{{s}_{23}}={{s}_{24}}. \]
\end{corollary}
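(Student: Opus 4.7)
The plan is to chain together the two preceding structural results, namely the corollary that elements of $G_k'$ have trivial rooted v.p.\ with $g_{11}=g_{12}$, and Theorem~\ref{comm_str} which gives $G_k' \simeq G_{k-1}\bot G_{k-1}$. The claim is about a commutator of two such elements, restricted to level~$X^2$, so I would first unfold the wreath recursion one level, then invoke the level-1 result on each of the two pieces that appear at level~$X^1$.

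First I would take $g,h\in G_k'$ and apply the preceding corollary: both $g$ and $h$ have trivial rooted v.p., so they can be written as $g=(g_1,g_2)$ and $h=(h_1,h_2)$. Theorem~\ref{comm_str} identifies $G_k'$ with a subdirect product $G_{k-1}\bot G_{k-1}$, which in particular tells me that each component $g_1,g_2,h_1,h_2$ lies in $G_{k-1}$. Then the multiplication rule for wreath recursions (with trivial top permutations) collapses to coordinatewise multiplication, so
\begin{align*}
s = [g,h] = \bigl([g_1,h_1],\,[g_2,h_2]\bigr),
\end{align*}
with $[g_1,h_1]$ and $[g_2,h_2]$ both lying in $G_{k-1}'$.

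Next I would apply the preceding corollary to each of $[g_1,h_1]$ and $[g_2,h_2]$ as elements of $G_{k-1}'$. That corollary asserts in particular that the two v.p.\ values at the first level of the subtree of such an element coincide. Translating back through the identification of $v_{1,1}X^{[k-1]}$ and $v_{1,2}X^{[k-1]}$ with the two rooted subtrees, the first-level v.p.\ of $[g_1,h_1]$ becomes $s_{2,1}$ and $s_{2,2}$ of $s$, while the first-level v.p.\ of $[g_2,h_2]$ becomes $s_{2,3}$ and $s_{2,4}$ of $s$. This yields exactly $s_{2,1}=s_{2,2}$ and $s_{2,3}=s_{2,4}$.

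There is no real obstacle here: once the wreath-recursion form $s=([g_1,h_1],[g_2,h_2])$ is in hand, the result is a direct reapplication of the preceding corollary one level deeper. The only point that requires a little care is confirming that Theorem~\ref{comm_str} really forces each coordinate of an element of $G_k'$ to lie in $G_{k-1}$ (not merely a subdirect subset), so that $[g_i,h_i]$ genuinely belongs to $G_{k-1}'$ and the corollary is applicable; this is exactly the content of the subdirect-product condition recalled just before Theorem~\ref{comm_str}.
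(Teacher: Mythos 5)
Your argument is correct, but it is organized quite differently from the paper's. The paper fixes the projections $g_1=(g_{21},g_{22})\sigma$, $h_1=(h_{21},h_{22})\pi$ of $g,h$ onto the factor acting on $v_{11}X^{[k-1]}$ and then computes the commutator $[(g_{21},g_{22})\sigma,(h_{21},h_{22})\pi]$ explicitly in all four cases $\sigma,\pi\in\{e,(1,2)\}$, using commutativity of the level-two states in $S_2$ to see that the two resulting coordinates coincide; it also appends a closure-under-products remark that your proof (reasonably) omits, since the statement concerns a single commutator $s=[g,h]$. You instead split one level higher: triviality of the root permutations of $g$ and $h$ (from the corollary to Lemma \ref{comm_rel}) gives the coordinatewise factorization $[g,h]=([g_1,h_1],[g_2,h_2])$, the subdirect-product structure of Theorem \ref{comm_str} places $g_i,h_i$ in $G_{k-1}$ so that $[g_i,h_i]\in G_{k-1}'$, and then the same corollary, applied one level down to $[g_i,h_i]$, yields the equality of its two first-level vertex permutations, which are exactly $s_{21},s_{22}$ (resp. $s_{23},s_{24}$). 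This buys you a shorter proof with no case analysis, at the price of leaning entirely on the earlier structural results; the paper's computation is self-contained at level two and in effect re-derives, for the particular element $[g_1,h_1]$, the fact you import from the earlier corollary. One caveat worth making explicit in your write-up: the equalities $s_{21}=s_{22}$ must be read as equalities of vertex permutations (elements of $S_2$), which is what the restriction to $X^{[3]}$ and the paper's own computation intend; the stronger reading as equality of full sections would not follow (and is not true, cf.\ the element $(12)(34)$ of $(Syl_2A_8)'$, whose sections at $v_{11}$ and $v_{12}$ differ), so your appeal to the earlier corollary is valid precisely because that corollary is likewise a statement about the parity/equality of vertex permutations coming from the even level index.
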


\begin{proof}
Since if $\alpha $  is a vertex permutation of $g$ an ${{v}_{2j}}$, then we do not distinguish $\alpha $ from the section ${{g}_{2j}}$ defined by it,  i.e., we can write ${{g}_{2j}}=\alpha $. Note that permutations of $g\in {{G}^{''}}$ in vertices of ${{X}^{0}}$ and ${{X}^{1}}$are trivial, conversely   ${{g}_{21}}$ is coincide with v.p.  in ${{v}_{21}}$.
The proof is based on  Lemma  \ref{comm_rel}. Taking  in the consideration that ${{G}_{k}}'\simeq {{H}_{k-1}}\bot {{H}_{k-1}}$ it is  enough to prove that ${{s}_{21}}={{s}_{22}}$ the prove  of ${{s}_{23}}={{s}_{24}}$  is analogous.   Therefore we consider projection ${{g}_{1}},\,\,{{h}_{1}}$  of  $g,h\in G'$ on ${{H}_{k-1}}$  such that  ${{g}_{1}}\in {{H}_{k-1}}\triangleleft Aut{{v}_{11}}{{X}^{[k-1]}}$,  ${{h}_{1}}\in {{H}_{k-1}}\triangleleft Aut{{v}_{11}}{{X}^{[k-1]}}$. Thus it can be presented in form   ${{g}_{1}}=({{g}_{21}},\,{{g}_{22}})\sigma ,\,\,{{h}_{1}}=({{h}_{21}},\,{{h}_{22}})\pi \in G'$, where  $\sigma ,\,\,\pi \in {{S}_{2}}$.
 We find the commutator $[{{g}_{1}},\,\,{{h}_{1}}]$ in form of wreath recursion $\left( {{s}_{21}},\,\,{{s}_{22}} \right){{\pi }_{1}}$.
$g=\left( {{g}_{21}},{{g}_{22}},{{g}_{23}},{{g}_{24}} \right)\sigma =\left( \left( {{g}_{21}},{{g}_{22}}){{\sigma }_{1}},({{g}_{23}},{{g}_{24}} \right){{\sigma }_{2}} \right)$, where ${{g}_{2i}},\,\,\,1\le i\le 4$ are states of second level, $\sigma $ are rooted permutation of states of first level, because v.p. above are trivial, which permutes vertices of second level with its subtrees and ${{\sigma }_{1}},\,\,{{\sigma }_{2}}$ rooted permutations of  subtrees with roots in ${{v}_{21}},\,\,{{v}_{22}}$ and ${{v}_{23}},\,\,{{v}_{24}}$. Therefore $\sigma =\left( {{\sigma }_{1}},\,{{\sigma }_{2}} \right)$.
 We shall consider $g=({{g}_{21}},\,{{g}_{22}})\sigma ,\,\,h=({{h}_{21}},\,{{h}_{22}})\pi \in G'$, where \[\sigma =(1,\,2),\,\,\,\,\pi =e. \] Therefore $g\in {{H}_{k-1}}\triangleleft Aut{{v}_{11}}{{X}^{[k-1]}}$, $h\in {{H}_{k-1}}\triangleleft Aut{{v}_{11}}{{X}^{[k-1]}}$. This case is possible because we take 2 different elements $g,\,\,h\in G'$. Then we have  $[({{g}_{21}},{{g}_{22}})\sigma ,\,\,\,\,({{h}_{21}},{{h}_{22}})\pi ]=({{g}_{21}}{{h}_{22}}{{g}_{21}}{{h}_{21}},\,{{g}_{22}}{{h}_{21}}{{g}_{22}}{{h}_{22}})$ we conclude that it is
sufficient to prove equality ${{s}_{21}}={{s}_{22}}$. The prove that  ${{s}_{23}}={{s}_{24}}$ is the same.  Elements from first factor ${{H}_{k-1}}$ of   ${{G}_{k}}'$  has form $g=({{g}_{21}},{{g}_{22}})\sigma ,\,\,\,h=({{h}_{21}},{{h}_{22}})\pi \in {{G}_{k-1}}\triangleleft Aut{{v}_{12}}{{X}^{[k-1]}}$. To find $[g,h]$ in rest of cases besides $\sigma =(1,\,2),\,\,\,\,\pi =e, $  we have to shortly consider the feasible  cases:

1. ~ \ \ \ \ \ \ \ \ \ \  \ \ \ \ \  \ \ \ \ \ \ \ \
$\sigma ={{\overset{\scriptscriptstyle\frown}{g}}_{11}}=e,\,\,\,\,\,\pi ={{\overset{\scriptscriptstyle\frown}{g}}_{12}}=e,$ \\
Therefore we obtain a commutator
 $[({{g}_{21}},{{g}_{22}})\sigma ,\,\,({{h}_{21}},{{h}_{22}})\pi ]=({{g}_{21}}{{h}_{21}}{{g}_{21}}{{h}_{21}},\,{{g}_{22}}{{h}_{22}}{{g}_{22}}{{h}_{22}}).$

2.  ~ \ \ \ \ \ \ \ \ \ \  \ \ \ \ \  \ \ \ \ \ \ \ \  $\sigma =\,\pi ={{\overset{\scriptscriptstyle\frown}{g}}_{11}}={{\overset{\scriptscriptstyle\frown}{g}}_{12}}=(1,2),$

 then $[({{g}_{21}},{{g}_{22}})\sigma ,\,\,\,\,({{h}_{21}},{{h}_{22}})\pi ]=({{g}_{21}}{{h}_{22}}{{g}_{22}}{{h}_{21}},\,\,{{g}_{22}}{{h}_{21}}{{g}_{21}}{{h}_{22}}).$
In view of the fact of commutativity of ${{g}_{21}},\,\,{{g}_{22}},\,\,\,{{g}_{23}},\,\,{{g}_{24}}\in {{S}_{2}}$ we have \[{{g}_{21}}{{h}_{22}}{{g}_{22}}{{h}_{21}}=\,\,{{g}_{22}}{{h}_{21}}{{g}_{21}}{{h}_{22}},\] viz ${{s}_{21}}={{s}_{22}}$.

3. Case \[\sigma =e,\,\,\,\,\,\pi =(1,\,2).\] Then we have \[[({{g}_{21}},{{g}_{22}})\sigma ,\,\,\,\,({{h}_{21}},{{h}_{22}})\pi ]=({{g}_{21}}{{h}_{21}}{{g}_{22}}{{h}_{21}},\,{{g}_{22}}{{h}_{22}}{{g}_{21}}{{h}_{22}}).\] Consider  states of  this commutator \[{{g}_{21}}{{h}_{21}}{{g}_{22}}{{h}_{21}}={{g}_{21}}{{g}_{22}}{{h}_{21}}{{h}_{21}}={{g}_{21}}{{g}_{22}}=\,{{g}_{22}}{{g}_{21}}{{h}_{22}}{{h}_{22}}=\,{{g}_{22}}{{h}_{22}}{{g}_{21}}{{h}_{22}}.\] Thus, we see that the states are equal again, i.e. ${{s}_{21}}={{s}_{22}}$.

4. And recall the  case \[\sigma =(1,\,2),\,\,\,\,\pi =e\] is completely analogous to case 3 because\[[({{g}_{21}},{{g}_{22}})\sigma ,\,\,\,\,({{h}_{21}},{{h}_{22}})\pi ]=({{g}_{21}}{{h}_{22}}{{g}_{21}}{{h}_{21}},\,{{g}_{22}}{{h}_{21}}{{g}_{22}}{{h}_{22}}).\]
thus ${{s}_{21}}={{s}_{22}}.$


  Consider  the product of  commutators  $\left[ g,\,h \right]\cdot \left[ {{g}_{1}},\,{{h}_{1}} \right]$ it already proven that
      $I{{n}_{2}}(g)$, $I{{n}_{2}}(h)$ and $I{{n}_{2}}({{g}_{1}})$, $I{{n}_{2}}({{h}_{1}})$ are even its states satisfy  ${{s}_{21}}={{s}_{22}},\,\,\,\,{{s}_{23}}={{s}_{24}}$. Recall that  elements of $G''$ have form $\left( {{s}_{21}},\,\,{{s}_{22}} \right){{\pi }_{1}}$, $\left( {{s}_{23}},\,\,{{s}_{24}} \right){{\pi }_{2}}$, where ${{\pi }_{1}}$, ${{\pi }_{2}}$  are trivial  then elements $\left[ g,\,h \right],\,\,\,\left[ {{g}_{1}},\,{{h}_{1}} \right]$ multiply directly. Consequently, the equality of states of product $\left[ g,\,h \right]\cdot \left[ {{g}_{1}},\,{{h}_{1}} \right]$ holds.
\end{proof}

\begin{lemma} \label{comm}
An element $g$ belongs to $G_k' \simeq Syl_2{A_{2^k}}$ iff $g$ is an arbitrary element from $G_k$ which has all even indexes  on ${{X}^{l}},\,\,l<k-1$ of ${{X}^{[k]}}$ and on ${{X}^{k-2}}$ of subtrees ${{v}_{11}}{{X}^{[k-1]}}$ and ${{v}_{12}}{{X}^{[k-1]}}$.



 \end{lemma}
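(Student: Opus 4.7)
The plan is to leverage the structural description $G_k' \simeq G_{k-1} \bot G_{k-1}$ from Theorem~\ref{comm_str} together with the parity results of Lemma~\ref{comm_rel}, and to interpret the two conditions in the statement as (i) a ``global'' parity condition on upper levels, controlled by the even subdirect product $\bot$, and (ii) a ``local'' bottom-level condition on each of the two half-subtrees, controlled by the membership of the two states in $G_{k-1}$.

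For necessity, I would assume $g \in G_k'$. Lemma~\ref{comm_rel} immediately gives even index on every level $X^l$ of $X^{[k]}$, hence in particular on $X^l$ for each $l < k-1$. By Theorem~\ref{comm_str}, $g$ can be written as a pair $(g_1, g_2)$ with $g_1, g_2 \in G_{k-1}$ acting on the subtrees rooted at $v_{11}$ and $v_{12}$. Since $G_{k-1} \simeq Syl_2 A_{2^{k-1}}$ acts by even permutations on the $2^{k-1}$ leaves of its subtree, an even number of active vertex permutations is forced at the penultimate level of each subtree; this is exactly the evenness of the index on $X^{k-2}$ of $v_{11}X^{[k-1]}$ and of $v_{12}X^{[k-1]}$.

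For sufficiency, I would take $g \in G_k$ satisfying the two stated parity conditions and reconstruct it as an element of $G_k'$. The even index on $X^0$ forces the rooted permutation of $g$ to be trivial, so $g = (g_1, g_2)$ without a top-level swap. The evenness of the index on $X^{k-2}$ of each subtree, combined with Proposition~\ref{B_k_criteria} applied inside each half, yields $g_1, g_2 \in G_{k-1}$. The remaining even-index conditions on $X^l$ for $1 \le l \le k-2$ split across the two halves of the tree and supply precisely the parity data that the operation $\bot$ from Theorem~\ref{comm_str} demands on the product $G_{k-1}\bot G_{k-1}$. Hence $(g_1, g_2) \in G_{k-1}\bot G_{k-1} \simeq G_k'$, which finishes the proof.

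The main obstacle will be keeping the index bookkeeping straight: the level $X^{k-2}$ of a height-$(k-1)$ subtree corresponds to level $k-1$ of the ambient tree, and the evenness of the \emph{total} index on $X^{k-1}$ (which is already guaranteed by $g \in G_k$) is strictly weaker than the per-half condition imposed here. Making this distinction precise, and confirming that Theorem~\ref{comm_str} truly encodes the per-half bottom-level parity and not merely the global one, is where the care is needed; once that is spelled out, both implications reduce to direct applications of the earlier lemmas.
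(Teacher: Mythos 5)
Your necessity direction is sound: Lemma~\ref{comm_rel} gives evenness of the index on every level of the ambient tree, and the per-subtree evenness on the bottom level follows once you know the two states $g_1,g_2$ of $g\in G_k'$ each lie in $G_{k-1}$ (equivalently, each acts by an even permutation on the leaves of its half). That half of your argument, and the bookkeeping remark at the end distinguishing the per-half condition from the weaker total-parity condition on $X^{k-1}$, are correct and correctly identify where the extra content of the lemma sits.

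The sufficiency direction, however, has a genuine gap: you discharge the entire containment ``every $g\in G_k$ with the stated parity pattern lies in $G_k'$'' onto Theorem~\ref{comm_str}, reading it as the set-theoretic equality $G_k'=G_{k-1}\bot G_{k-1}$ and in particular as asserting that \emph{every} admissible pair of states is realized by an element of the commutator subgroup. But that inclusion $G_{k-1}\bot G_{k-1}\subseteq G_k'$ is exactly the hard half of the present lemma, and the paper's justification of Theorem~\ref{comm_str} is a two-sentence restatement that does not construct anything; the actual construction is performed inside the proof of Lemma~\ref{comm} itself. The paper's argument works bottom-up: for each level $X^l$ it exhibits explicit commutators $\alpha\theta\alpha^{-1}\theta^{-1}$, with $\theta$ of prescribed index $x$ and $\alpha$ chosen to permute the active vertices of $\theta$, realizing an arbitrary even index $y$, $0\le y\le 2x$, at an arbitrary location (and separately on $X_1$ and $X_2$ at the bottom level), and then verifies that the resulting set is closed under multiplication and conjugation, so that it exhausts the commutator subgroup. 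Your proposal contains none of this constructive content, so as written it is circular in substance even though Theorem~\ref{comm_str} formally precedes the lemma in the text. To repair it you would need either to supply an independent proof of the inclusion $G_{k-1}\bot G_{k-1}\subseteq G_k'$ (e.g.\ by exhibiting generators of the even subdirect product as explicit commutators of elements of $G_k$), or to carry out the paper's conjugation argument directly.
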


\begin{proof}

Let us prove the ampleness by induction on a number of level $l$ and index of automorphism $g$ on $X^l$.
Conjugation by automorphism $\alpha= \alpha_0$ from $Aut{{v}_{11}}{{X}^{\left[ k-1 \right]}}$ of automorphism $\theta $, that has index $x:$ $1 \leq x \leq 2^{k-2}$ on ${{X}_{1}}$ does not change $x$. Also automorphism $\theta^{-1} $ has the same number $x$ of v. p. on $X_{k-1}$ as $\theta $ has. If $\alpha$ from $Aut{{v}_{11}}{{X}^{\left[ k-1 \right]}}$ and $ \alpha \notin v_{12} Aut{{X}^{\left[ k \right]}}$ then conjugation $(\alpha \theta {{\alpha }^{-1}})$ permutes on $X^{k-1}$ vertices which of $X_1$.

 Thus, ${\alpha }\theta {\alpha^{-1} }$ and $\theta$ have the same parities of number of active v.p. on $X_1$ ($X_2$). Hence, a product ${\alpha }\theta {\alpha^{-1} } \theta^{-1}$ has an even number of active v.p. on $X_1$ ($X_2$) in this case. More over  a coordinate-wise sum by \texttt{mod2} 
  of active v. p. from $(\alpha \theta {{\alpha }^{-1}})$ and $\theta^{-1}$ on $X_1$ ($X_2$) is even and equal to $y:$ $0 \leq y \leq 2x$.


   If conjugation by $\alpha$ permutes sets $X_1$ and $X_2$ then there are  coordinate-wise sums of no trivial v.p. from $\alpha \theta \alpha^{-1} \theta^{-1}$ on $X_1$ (analogously on $X_2$) have form: \\ $({{s}_{k-1,1}}(\alpha \theta {{\alpha }^{-1}}),..., {{s}_{k-1, 2^{k-2}}}(\alpha \theta {{\alpha }^{-1}}) )\oplus ({{s}_ {k-1,1}}(\theta^{-1}), ..., {{s}_{k-1,{{2}^{k-2}}}}(\theta^{-1} ))$.
   This sum has even number of v.p. on $X_1$ and $X_2$ because $(\alpha \theta {{\alpha }^{-1}})$ and ${{\theta }^{-1}}$ have a same parity of no trivial v.p. on $X_1$ ($X_2$).  Hence, $(\alpha \theta {{\alpha }^{-1}}){{\theta }^{-1}}$ has even number of v.p. on ${{X}_{1}}$ as well as on ${{X}_{2}}$.


An automorphism $\theta $ from $G_k$ was arbitrary so number  of active v.p. $x$ on $X_1$ is an arbitrary $0 \leq x\leq 2^l$. And ${\alpha }$ is and arbitrary from $AutX^{[k-1]}$ so vertices can be permuted in such way that the commutator $[{\alpha },\theta]$ has arbitrary even number $y$ of active v.p. on $X_1$, $0 \leq y \leq 2x$.

 A conjugation of an automorphism $\theta $ having index $x$, $1 \leq x \leq 2^{l}$ on ${{X}^{l}}$  by different $\alpha \in Aut{{X}^{[k]}}$  gives us all tuples of active v.p. with the same $\rho_l(\theta)$ that $\theta $ has on ${{X}^{l}}$, by which $Aut{{X}^{[k]}}$ acts on $X^l$. Let supposition  of induction for element $g$  with index $2k-2$ on $X^l$ holds so $g=(\alpha \theta {{\alpha }^{-1}}){{\theta }^{-1}}$, where $In_l(\theta)=x$. To make a induction step we complete $\theta$ by such vertex  permutation in $v_{l,x}$ too $\theta$ has suitable distance structure for $g=(\alpha \theta {{\alpha }^{-1}}){{\theta }^{-1}}$, also if $g$ has rather different distance structure $\rho_l(g)$ from $\rho_l(\theta)$ then have to change $\theta$. In case when we complete $\theta$ by $v_{l,x}$ it has too satisfy a condition $(\alpha \theta {{\alpha }^{-1}}) (v_{x+1})=v_{l, y}$, where $v_{l, y}$ is a new active vertex of $g$ on $X^l$.
  Note that $v(x+1) $ always can be chosen such that acts in such way $\alpha(v(x+1)) = v(2k+2)$ because action  of $\alpha$ is 1-transitive. Second vertex arise when we multiply $(\alpha \theta {{\alpha }^{-1}})$ on $\theta^{-1}$. 
  Hence $In_l(\alpha \theta {{\alpha }^{-1}})=2k+2$ and coordinates of new vertices $v_{2k+1}, v_{2k+2}$ are arbitrary from 1 to $2^l$.


 So multiplication $(\alpha \theta {{\alpha }^{-1}})\theta $ generates a commutator having index $y$  equal to coordinate-wise sum by $mod 2$ of no trivial v.p. from vectors $({{s}_{l1}}(\alpha \theta {{\alpha }^{-1}}),{{s}_{l}}_{2}(\alpha \theta {{\alpha }^{-1}}),...,{{s}_{l{{2}^{l}}}}(\alpha \theta {{\alpha }^{-1}}))\oplus ({{s}_{l1}}(\theta ),{{s}_{l}}_{2}(\theta ),...,{{s}_{l{{2}^{l}}}}(\theta ))$  on ${{X}^{l}}$. A indexes parities of  $\alpha \theta {{\alpha }^{-1}}$  and  ${{\theta }^{-1}}$ are same so their sum by $mod 2$ are even.  Choosing $\theta $ we can  choose an arbitrary index $x$ of $\theta $ also we can choose arbitrary $\alpha $ to make a permutation of active v.p. on ${{X}^{l}}$.  Thus, we obtain an element with arbitrary even index on ${{X}^{l}}$ and arbitrary location of active v.p. on ${{X}^{l}}$.

Check that property of number parity of v.p. on ${{X}_{1}}$  and on ${{X}_{2}}$  is closed with respect to conjugation. We know that numbers of active v. p. on ${{X}_{1}}$ as well as on ${{X}_{2}}$ have the same parities. So
action by conjugation only can permutes it, hence, we again get the same  structure of element. Conjugation by automorphism $\alpha $ from  $Aut{{v}_{11}}{{X}^{\left[ k-1 \right]}}$  automorphism $\theta $, that has odd number of  active v. p. on ${{X}_{1}}$  does not change its parity.
Choosing the $\theta $ we can choose arbitrary index $x$ of
$\theta $ on ${{X}^{k-1}}$ and number of active v.p. on ${{X}_{1}}$  and  ${{X}_{2}}$  also we can choose arbitrary $\alpha $ to make a permutation active v.p. on ${{X}_{1}}$  and  ${{X}_{2}}$. Thus, we can generate all possible elements from a commutant. Also this result can be deduced due to Lemma \ref{comm B_k old}.

Let $\kappa_1, \kappa_2 \in K$ and each of which has even index on $X^l$ and $2^l$-tuples of v.p. $({{s}_{l,1}}(\kappa_1),..., {{s}_{k-1, 2^l}}(\kappa_1) )$, $({{s}_ {l,\kappa_1(1)}}(\kappa_2), ..., {{s}_{l,\kappa_1({{2}^{l}})}}(\kappa_2 ))$ corresponds to portrait of $\kappa_1$, $\kappa_2$ on $X^l$.
  Then a number of non-trivial coordinates in a coordinate-wise sum \\
   $({{s}_{l,1}}(\kappa_1),..., {{s}_{k-1, 2^l}}(\kappa_1) )\oplus ({{s}_ {l,\kappa_1(1)}}(\kappa_2), ..., {{s}_{l,\kappa_1({{2}^{l}})}}(\kappa_2 ))$ is even.

Let us check that the set of all commutators $K$ from $Syl_2 A_{2^k}$ is closed with respect  to multiplication of commutators.
  Note that conjugation of $\kappa $ can permute sets ${{X}_{1}}$ and ${{X}_{2}}$  so parities of $x_1$ and $X_2$ coincide. It is obviously that the parity of index of $\alpha \kappa \alpha^{-1}$ is the same as index of $\kappa $.

Check that a set $K$ is a set  closed with respect  to conjugation.

 Let $\kappa \in K$, then $\alpha \kappa {{\alpha }^{-1}}$  also belongs to $K$, it is so because  conjugation does not change index of an automorphism on a level. Conjugation only  permutes vertices on a level because elements of $Aut{{X}^{\left[ l-1 \right]}}$ acts  on vertices of  ${{X}^{l}}$. But as it was proved above elements  of $K$ have all possible indexes on ${{X}^{l}}$, so as a result of conjugation $\alpha \kappa {{\alpha }^{-1}}$ we obtain an element from $K$.

Check that the set of commutators is closed with respect to multiplication of commutators.
Let $\kappa_1, \kappa_2 $ be an arbitrary commutators of $G_k$. The parity of the number of vertex permutations on $X^l$ in the product $\kappa_1 \kappa_2 $  is determined exceptionally by the parity of the numbers of active v.p. on ${{X}^{l}}$ in $\kappa_1$ and $\kappa_2$ (independently from the action of v.p. from the higher levels). Thus $\kappa_1 \kappa_2 $ has an even index on $X^l$.

 Hence, a normal closure of the set $K$ coincides with $K$. It means that commutator subgroup of $Syl_2 A_{2^k}$ consists of commutators.
\end{proof}

\begin{proposition}\label{L_k_comm_criteria}
An element $(g_1, g_2)\sigma^i \in G_k'$ iff $g_1,g_2 \in G_{k-1}$ and $g_1g_2\in B_{k-1}'$.
\end{proposition}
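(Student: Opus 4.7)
The plan is to deduce this proposition by combining two earlier characterizations: Lemma~\ref{form of comm}, which describes $B_k'$ in terms of the product $g_1 g_2 \in B_{k-1}'$, and Lemma~\ref{comm}, which refines this for $G_k'$ by imposing even-parity conditions on certain level-indices. Both directions of the iff then become bookkeeping translations between these level conditions and the algebraic conditions on $g_1,g_2$.

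For the necessity, I would first use the inclusion $G_k' \leq B_k'$ (from $G_k \leq B_k$) and apply Lemma~\ref{form of comm} to $B_k = B_{k-1}\wr C_2$: this immediately forces the top permutation to be trivial ($i=0$) and $g_1 g_2 \in B_{k-1}'$. The additional condition $g_1,g_2 \in G_{k-1}$ then comes from Lemma~\ref{comm}, which requires any element of $G_k'$ to have an even number of active vertex permutations at the bottom level of each of the two subtrees $v_{1j}X^{[k-1]}$, $j=1,2$; restricted to the subtree rooted at $v_{1j}$ this is precisely the condition that $g_j$ lies in $Syl_2A_{2^{k-1}} = G_{k-1}$.

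For the sufficiency, assume $g_1,g_2 \in G_{k-1}$ and $g_1 g_2 \in B_{k-1}'$. By Lemma~\ref{comm B_k old} the latter implies $g_1 g_2$ has even index at every level of $X^{[k-1]}$, in particular at the bottom level, so $g_1 g_2 \in G_{k-1}$ and hence $(g_1,g_2) \in G_k$ by definition. To promote this to $G_k'$, I would verify the hypotheses of Lemma~\ref{comm}: the bottom-level-of-each-subtree condition is precisely $g_1,g_2 \in G_{k-1}$, and for the remaining levels $1 \leq l \leq k-2$ of $X^{[k]}$, the index of $(g_1,g_2)$ at level $l$ is the sum of the indices of $g_1$ and $g_2$ at level $l-1$ of their respective subtrees, so it suffices to show these have equal parity.

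The key technical point I expect to work through is the mod-$2$ additivity of the level index under multiplication in a $2$-group: because every v.p.\ takes values in $C_2$, the parity of the level-$l$ index of $g_1 g_2$ equals the sum of the corresponding parities for $g_1$ and $g_2$ (positions where both states are active cancel in pairs under XOR). Combined with Lemma~\ref{comm B_k old} applied to $g_1 g_2 \in B_{k-1}'$, this forces $g_1$ and $g_2$ to have matching index-parities at every level, which closes the argument. Everything else is mechanical translation between level-index conditions and subgroup memberships, so the proposition is essentially a clean repackaging of the earlier lemmas together with this parity-additivity observation.
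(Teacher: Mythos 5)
Your proof is correct and follows essentially the same route as the paper's: both directions come down to the level-index characterizations in Lemma~\ref{comm} and Lemma~\ref{comm B_k old} combined with the mod-2 additivity of the level index under multiplication. The only differences are cosmetic --- in the necessity you obtain $g_1g_2\in B_{k-1}'$ via $G_k'\le B_k'$ and Lemma~\ref{form of comm} rather than by summing the index conditions of Lemma~\ref{comm}, and you make explicit both the triviality of the top permutation and the parity-additivity step that the paper asserts without comment.
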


\begin{proof}
 Since, if $(g_1, g_2) \in G_k'$ then indexes of $g_1$ and $g_2$ on $X^{k-1}$ are even according to Lemma \ref{comm} thus, $g_1,g_2 \in G_{k-1}$. A sum of indexes of $g_1$ and $g_2$ on $X^{l}$, $l<k-1$ are even according to Lemma \ref{comm} too, so index of product $g_1 g_2$ on $X^{l}$ is even. Thus, $g_1g_2\in B_{k-1}'$. Hence, necessity is proved.

Let us prove the sufficiency via Lemma \ref{comm}.
Wise versa, if $g_1,g_2 \in G_{k-1}$ then indexes of these automorphisms on $X^{k-2}$ of subtrees $v_{11}X^{[k-1]}$ and $v_{12}X^{[k-1]}$ are even as elements from $ G_k'$ have.  
 The product $g_1g_2$ belongs to $B_{k-1}'$ by condition of this Lemma and so sum of indexes of $g_1, g_2$ on any level $X^l$,  $0 \leq l<k-1 $ is even. Thus, the characteristic property of $G_k'$ described in Lemma \ref{comm} holds.
\end{proof}

\begin{lemma} \label{comm1}
An element $g=(g_1, g_2)\sigma^{i}$ of $G_k$, $i\in\{0,1\}$ belongs to $G'_k$ iff $g$ has even index on ${{X}^{l}}$ for all $l<k-1$ and elements $g_1, g_2$ have even indexes on ${{X}^{k-1}}$, that is equally matched to $g_1, g_2 \in G_{k-1}$.
 \end{lemma}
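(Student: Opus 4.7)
The plan is to reduce Lemma~\ref{comm1} to Proposition~\ref{L_k_comm_criteria} combined with Lemma~\ref{comm B_k old}, by translating the condition $g_1 g_2 \in B_{k-1}'$ into parity-of-index statements for $g$ on the full tree $X^{[k]}$. Throughout I will use the parity identification coming from the wreath structure: for $g = (g_1, g_2)\sigma^i$ and any $m \geq 1$, the parity of the index of $g$ on the full-tree level $X^{m}$ equals the parity of the sum of the indices of $g_1$ and $g_2$ on the subtree level $X^{m-1}$, which in turn equals the parity of the index of $g_1 g_2$ on $X^{m-1}$ of the subtree. This last equality holds because at each vertex a v.p.\ of a product is an $\mathbb{F}_2$-sum (up to relabelling by the permutation above) of the v.p.'s of the factors, and relabelling does not alter the total number of non-trivial entries modulo~$2$. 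Under these conventions, $g_1 \in G_{k-1}$ is equivalent to even index of $g_1$ on $X^{k-2}$ of the subtree, i.e.\ on $X^{k-1}$ of the full tree, matching the phrasing of the lemma.

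First I would treat necessity. Assuming $g \in G_k'$, Proposition~\ref{L_k_comm_criteria} supplies both $g_1, g_2 \in G_{k-1}$ (which is the second clause) and $g_1 g_2 \in B_{k-1}'$. Lemma~\ref{comm B_k old} applied to $g_1 g_2 \in B_{k-1}'$ gives even index at every subtree level $X^l$ with $l < k-1$; by the parity identification this yields even index of $g$ on every full-tree level $X^m$ with $1 \leq m \leq k-1$. Since $G_k' \leq B_k'$, Lemma~\ref{comm B_k old} applied to $g$ itself also forces $i=0$, i.e.\ even index on $X^0$. Combining, $g$ has even index on $X^l$ for all $l < k$, and in particular for all $l < k-1$, which is the first clause.

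For sufficiency, suppose $g$ has even index on $X^l$ for all $l < k-1$ and $g_1, g_2 \in G_{k-1}$; the ambient condition $g \in G_k$ is then automatic, since $g_1 g_2 \in G_{k-1}$ by closure and Proposition~\ref{B_k_criteria} applies. By Proposition~\ref{L_k_comm_criteria} it suffices to verify $g_1 g_2 \in B_{k-1}'$, i.e.\ $g_1 g_2$ has even index on every subtree level $X^l$ with $l \leq k-2$. The hypothesis on $g$ at full-tree levels $X^1, \ldots, X^{k-2}$ supplies this for subtree levels $l = 0, \ldots, k-3$ via the parity identification. The missing subtree level $l = k-2$ is supplied by the membership $g_1, g_2 \in G_{k-1}$: each $g_j$ individually has even index on $X^{k-2}$ of the subtree, so their sum is even and hence $g_1 g_2$ has even index there as well.

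The one subtle point, and what I would call the main obstacle, is noticing that the first clause of the lemma is exactly one full-tree level short of what a direct application of Proposition~\ref{L_k_comm_criteria} requires, and that the missing subtree level $X^{k-2}$ is recovered not from the weaker ambient condition $g \in G_k$ (which only gives that the \emph{sum} of the two subtree-level-$(k-2)$ indices is even) but from the stronger individual condition $g_1, g_2 \in G_{k-1}$. Apart from this observation everything reduces to routine level-by-level parity bookkeeping.
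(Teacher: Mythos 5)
Your argument is correct and follows essentially the same route as the paper, whose entire proof is a one-line reduction to Lemma~\ref{comm} and Proposition~\ref{L_k_comm_criteria}: you likewise reduce everything to Proposition~\ref{L_k_comm_criteria}, merely substituting Lemma~\ref{comm B_k old} (the $B_{k-1}'$ index criterion applied to $g_1g_2$) for the citation of Lemma~\ref{comm} and writing out the level-shift parity bookkeeping that the paper leaves implicit. Your closing observation --- that the subtree level $X^{k-2}$ is recovered from the individual memberships $g_1,g_2\in G_{k-1}$ rather than from $g\in G_k$ --- is exactly the point the paper's terse proof glosses over, so the added detail is a genuine improvement rather than a deviation.
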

 \begin{proof}
The proof immediately follows from Lemma \ref{comm} and Proposition \ref{L_k_comm_criteria}.
 \end{proof}

Recall the Lemma on the commutators structure for an embedded commutator \cite{Gural}.

\begin{lemma}\label{Gural} Suppose $G$ is a  group with a subgroup $H$ such that $H \rhd G'$
and $G = <H, x>$. If  $w$ is a commutator in $G$, then $w = [axe, b]$  for some
$a,\text{ }b\in H$ and  $e\in Z$.
\end{lemma}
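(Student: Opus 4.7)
The plan is to exploit the cyclic structure of $G/H$ and reduce the claim to a ``Euclidean-algorithm'' manipulation of commutator pairs. First I would show that $H$ is normal in $G$: for every $h\in H$ one has $xhx\m = h\cdot [h\m,x\m]$, and $[h\m,x\m]\in G'\subseteq H$ implies $xhx\m\in H$. Hence $G/H=\langle xH\rangle$ is cyclic, and every element of $G$ can be written as $hx^k$ for some $h\in H$ and $k\in\mathbb{Z}$. Given a commutator $w=[u,v]\in G'$, I would write $u=h_1x^m$ and $v=h_2x^n$ with $h_1,h_2\in H$ and $m,n\in\mathbb{Z}$.

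Next I would record the two elementary identities
\[
[u,v]\;=\;[uv^k,v]\;=\;[u,vu^k]\qquad (k\in\mathbb{Z}),
\]
each of which follows from $[a,b]=aba\m b\m$ by cancelling the inserted power of $u$ or $v$. Because $H$ is normal, $uv^k$ has the form $h_1'x^{m+kn}$ for some $h_1'\in H$, and similarly $vu^k$ has the form $h_2'x^{n+km}$. Thus the allowed substitutions act on the pair of $x$-exponents $(m,n)$ exactly as the elementary column operations of the Euclidean algorithm: $(m,n)\mapsto(m+kn,n)$ or $(m,n)\mapsto(m,n+km)$, while preserving the commutator $w$.

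I would then apply the Euclidean algorithm to $(m,n)$, reducing to a pair of the form $(d,0)$, where $d=\gcd(|m|,|n|)$ (or $d=0$ when $m=n=0$). If the algorithm terminates at $(0,d)$ instead, the short detour $(0,d)\mapsto(d,d)\mapsto(d,0)$ restores the required form. At that point the new element in the second slot lies in $H$ and the first slot has the shape $ax^d$ with $a\in H$, yielding $w=[ax^e,b]$ for $e=d$ and $a,b\in H$. The one subtlety to be careful about is ensuring the surviving $x$-power lands in the first slot rather than the second, since the statement demands this asymmetry; the swap trick just noted is exactly what handles it.
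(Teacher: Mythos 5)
The paper does not actually prove this lemma: it is recalled from the literature (it is Guralnick's lemma on groups $G=\langle H,x\rangle$ with $G'\le H$), so your argument can only be measured against the standard proof — and it essentially \emph{is} the standard proof, and it is correct. The two identities $[uv^k,v]=[u,v]=[u,vu^k]$ check out under the paper's convention $[a,b]=aba^{-1}b^{-1}$; once $H\trianglelefteq G$ is known, $G=\bigcup_k Hx^k$, the substitutions act on the pair of $x$-exponents $(m,n)$ exactly as the elementary Euclidean operations $(m,n)\mapsto(m+kn,n)$ and $(m,n)\mapsto(m,n+km)$; and your final detour $(0,d)\mapsto(d,d)\mapsto(d,0)$ correctly forces the surviving power of $x$ into the first argument, which is the one asymmetry the statement requires. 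The degenerate cases ($m=n=0$, or one exponent already zero, or the algorithm ending at $-d$) are all absorbed by letting $e$ range over all of $\mathbb{Z}$. One small computational slip: with $[a,b]=aba^{-1}b^{-1}$ one has $h\cdot[h^{-1},x^{-1}]=x^{-1}hx$, not $xhx^{-1}$; the identity you want for that conjugate is $xhx^{-1}=h\,[h^{-1},x]$. Since both $[h^{-1},x]$ and $[h^{-1},x^{-1}]$ lie in $G'\subseteq H$, both conjugates $x^{\pm1}hx^{\mp1}$ land in $H$ and normality of $H$ follows either way, so the slip is harmless and the proof stands.
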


If we assume that ${{B}_{k}}=G$, $H={{G}_{k}}$ and since  ${{G}_{k}}\supseteq B_{k}^{'}$, then according to Lemma \ref{Gural} any element  $w\in B_{k}^{'}$ can be presented as commutator of an element from ${{B}_{k}}$ and an extending element.
In our case as an extending element for maximal subgroup  $H={{G}_{k}}$ to ${{B}_{k}}$ we could take the generator ${{\alpha }_{k-1}}$ \cite{SkIrred}. Thus, any element $w\in B_{k}^{'}$ can be presented as $w = [axe, b]$.  Where $x$ is an extending element for ${{G}_{k}}$ to ${{B}_{k}}$.
Also as an extending element it can be chosen an arbitrary element with an odd index on $k-1$ level and zero indexes on rest of levels.

\begin{proposition}\label{comm_F_k_is_subgroup_of_L_k}
The following inclusion $B_k'<G_k$ holds.
\end{proposition}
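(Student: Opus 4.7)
The plan is to deduce the inclusion almost immediately from structural facts already established in the paper. First I would recall that, as noted after Theorem~\ref{max}, the subgroup $G_k$ has index $2$ in $B_k = \wr_{i=1}^k C_2$. Any subgroup of index $2$ is automatically normal, so $G_k \triangleleft B_k$, and the quotient $B_k/G_k$ has order $2$ and is therefore cyclic, in particular abelian.

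Since the commutator subgroup of any group is contained in every normal subgroup with abelian quotient (being the smallest such subgroup), this forces $B_k' \leq G_k$. So the whole argument reduces to a one-line application of the universal property of the abelianization.

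For readers who prefer a direct combinatorial justification in the tree-automorphism language used throughout the paper, I would sketch the following alternative. By Lemma~\ref{comm B_k old}, every $g \in B_k'$ has even index on $X^l$ for all $0 \leq l < k$, and in particular on $X^{k-1}$. The permutation induced by $g$ on the leaves $X^k$ is the product of the transpositions coming from each active vertex permutation at level $k-1$, so its parity as an element of $S_{2^k}$ equals the parity of the index of $g$ on $X^{k-1}$. As this index is even, $g$ acts on $X^k$ by an even permutation, and Theorem~\ref{max} identifies this exactly with membership in $G_k \simeq Syl_2 A_{2^k}$.

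There is essentially no obstacle here: the nontrivial content is entirely absorbed into the earlier results (index~$2$ of $G_k$ in $B_k$, or equivalently Lemma~\ref{comm B_k old} combined with Theorem~\ref{max}). The only thing worth being careful about is that the inclusion is stated as strict (`$<$'); strictness follows because $B_k$ itself is non-abelian for $k \geq 2$, yet $B_k' \neq B_k$ (as $B_k$ is solvable, being an iterated wreath product of cyclic groups), while $G_k$ is itself a proper subgroup of $B_k$, so $B_k' \leq G_k < B_k$.
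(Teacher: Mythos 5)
Your proof is correct, but it takes a genuinely different route from the paper's. The paper's own proof is a two-line structural identification: it asserts $B_k' = \wr_{i=1}^{k-1} C_2 = B_{k-1}$ and then reads off $B_k' < G_k$ from the decomposition $G_k \simeq B_{k-1}\ltimes W_{k-1}$ of Theorem~\ref{max}. Your primary argument instead uses only that $|B_k : G_k| = 2$ (stated just before Theorem~\ref{max}): an index-$2$ subgroup is normal with cyclic, hence abelian, quotient, so it must contain the derived subgroup. This is cleaner and more robust, since it sidesteps the need to identify $B_k'$ explicitly --- an identification the paper states as an equality $B_k' = B_{k-1}$ here but only as an inclusion $B_k' < B_{k-1}$ in Proposition~\ref{G_k_is_normal_in_B_k}, and which in any case requires interpreting where that copy of $B_{k-1}$ sits inside $B_k$. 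Your second, combinatorial argument (even index on $X^{k-1}$ via Lemma~\ref{comm B_k old}, hence even action on $X^k$, hence membership in $G_k$) is also sound and matches the spirit of Proposition~\ref{B_k_criteria}; the only small imprecision is the claim that the permutation induced on $X^k$ is \emph{exactly} the product of the transpositions coming from level $k-1$: active vertices at levels $l\le k-2$ also move leaves, but each such vertex permutation swaps two blocks of size $2^{k-l-1}\ge 2$ and is therefore an even permutation of $X^k$, so the parity computation you rely on is still valid. Finally, a pedantic note on strictness: the paper's ``$<$'' is being used loosely for containment, and indeed $B_k' \ne G_k$ since their orders differ, so nothing is lost either way.
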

 \begin{proof}
 Since  $B_k' = \wr_{i=1}^{k-1} C_2 = B_{k-1}$ and $G_k \simeq  B_{k-1} \ltimes W_{k-1} $ we have $B_k' < G_k$.
 \end{proof}
\begin{proposition}\label{G_k_is_normal_in_B_k}
The group $G_k$ is normal in wreath product $\stackrel{k}{ \underset{\text{\it i=1}}{\wr } }C_2$ i.e. $ G_k \lhd B_k$.
\end{proposition}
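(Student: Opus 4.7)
The plan is to prove $G_k \lhd B_k$ by exhibiting $G_k$ as the kernel of a homomorphism from $B_k$ onto $C_2$, which immediately gives normality. Since $B_k \simeq \stackrel{k}{ \underset{\text{\it i=1}}{\wr }}C_2$ acts faithfully on $X^k$ as a subgroup of $S_{2^k}$, we have the sign homomorphism $\varepsilon: B_k \to \{\pm 1\}$ restricted from $S_{2^k}$. By the definition in the preliminaries, $G_k \simeq Syl_2 A_{2^k}$ consists exactly of those elements of $B_k$ acting on $X^k$ by even permutations, so $G_k = \ker \varepsilon$. Kernels are normal, hence $G_k \lhd B_k$.

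Equivalently (and this is the approach I would actually write out in the paper, to stay close to the recursive language used throughout), I would use the recursive description $G_k = \{(g_1,g_2)\pi \in B_k \mid g_1g_2 \in G_{k-1}\}$ given at the start of Section 2 to produce the same homomorphism by induction. First, for $k=1$ the statement is trivial since $G_1 = \langle e \rangle \lhd C_2 = B_1$. For the inductive step, assuming $G_{k-1} \lhd B_{k-1}$, define $\varphi: B_k \to B_{k-1}/G_{k-1}$ by $(g_1,g_2)\pi \mapsto g_1 g_2 \cdot G_{k-1}$ and verify it is a homomorphism using the wreath-recursion multiplication rule; then $G_k = \ker\varphi$ by the recursive definition.

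The only non-routine step is checking that $\varphi$ is indeed a homomorphism, which requires the inductive hypothesis together with the fact that $B_{k-1}/G_{k-1}$ is abelian (of order $2$). Concretely, for $(g_1,g_2)\pi$ and $(h_1,h_2)\tau$ in $B_k$, their product has bottom-level coordinates $(g_1 h_{\pi(1)}, g_2 h_{\pi(2)})$, and the product of these coordinates is $g_1 h_{\pi(1)} g_2 h_{\pi(2)}$, which modulo $G_{k-1}$ equals $g_1 g_2 h_1 h_2 \cdot G_{k-1}$ since $B_{k-1}/G_{k-1}$ has order $2$ and is therefore abelian, and since $G_{k-1}$ is normal in $B_{k-1}$ (inductive hypothesis) so cosets multiply unambiguously. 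This shows $\varphi$ is a homomorphism with kernel $G_k$, completing the induction.

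As a sanity check one observes $[B_k : G_k] = 2$ from the order computation $|B_k| = 2^{2^k-1}$ and $|G_k| = 2^{2^k-2}$, which already forces normality by the standard index-$2$ argument; the homomorphism construction above simply makes the normality explicit in the wreath-recursion formalism consistent with the rest of the paper.
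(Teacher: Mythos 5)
Your proof is correct, but it takes a genuinely different route from the paper's. The paper argues that $B_k' \le B_{k-1} \le G_k$ (the second inclusion coming from the semidirect decomposition $G_k \simeq B_{k-1}\ltimes W_{k-1}$ of Theorem \ref{max}) and then invokes the standard fact that any subgroup containing the derived subgroup is normal. You instead exhibit $G_k$ as the kernel of a surjection $B_k \to C_2$ --- either the sign of the action on $X^k$, or its recursive incarnation $(g_1,g_2)\pi \mapsto g_1g_2\,G_{k-1}$ --- and also note the index-$2$ shortcut. Your argument is more elementary and self-contained: it needs only the defining property of $G_k$ (even action on $X^k$, equivalently the recursion $g_1g_2\in G_{k-1}$) and avoids any appeal to Theorem \ref{max} or to the structure of $B_k'$; the index-$2$ observation alone already settles the matter. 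The paper's route, by contrast, leans on the structural results it has already set up and yields the slightly stronger intermediate fact $B_k' \le G_k$ (recorded separately as Proposition \ref{comm_F_k_is_subgroup_of_L_k}), which is then reused elsewhere. Your verification that $\varphi$ is a homomorphism is sound: commutativity of the order-$2$ quotient $B_{k-1}/G_{k-1}$ is exactly what makes $g_1h_{\pi(1)}g_2h_{\pi(2)}$ congruent to $g_1g_2h_1h_2$ modulo $G_{k-1}$, and the inductive hypothesis is what makes that quotient a group in the first place, so there is no circularity.
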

\begin{proof}
The commutator of $B_k$ is $B_k' < B_{k-1}$. In other hand $ B_{k-1} <  G_{k}$ because $G_k \simeq  B_{k-1} \ltimes W_{k-1} $ consequently $B_k' < G_{k}$. Thus, $G_{k} \lhd B_{k}$.
\end{proof}
There exists a normal embedding  (normal  injective monomorphism)  $\varphi :\,\,{{G}_{k}}\to {{B}_{k}}$  \cite{Heinek} i.e.$~~{{G}_{k}}\triangleleft {{B}_{k}}$. Actually, it implies from  Proposition \ref{G_k_is_normal_in_B_k}. Also according to \cite{SkIrred} the index $\left| {{B}_{k}}~ :~{{G}_{k}} \right|=2$ so ${G}_k$ is a normal subgroup that is a factor subgroup $~~{}^{{{B}_{k}}}/{}_{{{C}_{2}}}\simeq {{G}_{k}}$.

\begin{theorem}\label{_comm_F_k_eq_[L_k,F_k]}
Elements of $\aut[k]'$ have the following form $\aut[k]'=\{[f,l]\mid f\in B_k, l\in G_k\}=\{[l,f]\mid f\in B_k, l\in G_k\}$.
\end{theorem}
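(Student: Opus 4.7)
The plan is to combine Lemma \ref{Gural} with the commutator width bound established earlier. By Corollary \ref{cw_syl_p_s_p_k_eq_1_and_syl_p_a_p_k_eq_1} every element $w \in B_k'$ is already a single commutator $[u,v]$ with $u,v \in B_k$, so it is enough to show that any single commutator in $B_k$ admits a representation $[f,l]$ with $f \in B_k$ and $l \in G_k$.

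To invoke Lemma \ref{Gural} I would set $G = B_k$ and $H = G_k$. The hypothesis $H \supseteq G'$ is exactly Proposition \ref{comm_F_k_is_subgroup_of_L_k}, and since $[B_k : G_k] = 2$ one has $B_k = \langle G_k, x \rangle$ for any $x \in B_k \setminus G_k$ (for instance $x = \alpha_{k-1}$, as noted just after Lemma \ref{Gural}). The lemma then writes $w = [a x^e, b]$ with $a, b \in G_k$ and $e \in \mathbb{Z}$. The crucial observation is that $x^2 \in G_k$ because $G_k$ has index $2$ in $B_k$, so decomposing $e = 2q + r$ with $r \in \{0,1\}$ and absorbing $(x^2)^q$ into $a$ replaces $a x^e$ by $a' x^r$ with $a' \in G_k$. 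When $r = 0$ this gives $w = [a', b]$ with $a' \in G_k \subseteq B_k$ and $b \in G_k$; when $r = 1$ it gives $w = [a' x, b]$ with $a' x \in B_k$ and $b \in G_k$. In either case $w = [f, l]$ with $f \in B_k$ and $l \in G_k$, which establishes one inclusion; the reverse inclusion is immediate from $G_k \subseteq B_k$.

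The second description $\{[l,f] \mid f \in B_k,\, l \in G_k\} = B_k'$ then follows formally from the first via the identity $[l,f] = [f,l]\m$ together with the fact that $B_k'$ is a subgroup and is therefore closed under inversion. I do not anticipate a genuine obstacle in carrying this out: the argument is a short chain of reductions, and the one step that needs a moment's care is the exponent-parity absorption $a(x^2)^q \in G_k$, which is immediate once $G_k \lhd B_k$ (Proposition \ref{G_k_is_normal_in_B_k}) and $[B_k : G_k] = 2$ have been recorded.
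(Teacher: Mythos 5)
Your proof is correct, but it follows a genuinely different route from the one in the paper. The paper argues by induction on $k$: it writes $w\in B_k'$ via Lemma~\ref{form of comm_2} as the wreath recursion $(r_1,\, r_1\m[f,g])$, uses the induction hypothesis to arrange $g\in G_{k-1}$, and then applies the explicit construction of Corollary~\ref{c_2_wr_b_elem_repr} together with Proposition~\ref{B_k_criteria} to verify that the factor $(e,a_{1,2})\sigma$ lies in $G_k$. You bypass the wreath recursion entirely: you first reduce to a single commutator using $cw(B_k)=1$ (Corollary~\ref{cw_syl_p_s_p_k_eq_1_and_syl_p_a_p_k_eq_1}) --- this step is needed because Lemma~\ref{Gural} only applies to elements that are already commutators, and you are right to make it explicit --- and then invoke Lemma~\ref{Gural} with $G=B_k$, $H=G_k$, which is legitimate since $B_k'<G_k$ (Proposition~\ref{comm_F_k_is_subgroup_of_L_k}) and $[B_k:G_k]=2$; the lemma places the second entry $b$ in $G_k$, which is all the theorem asks. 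This is essentially the route the paper itself sketches in the discussion immediately after Lemma~\ref{Gural} but does not carry out in the actual proof. There is no circularity, since the commutator-width corollary is established independently of this theorem. Your argument is shorter and non-inductive, but it leans on the externally cited Lemma~\ref{Gural}, whereas the paper's inductive construction is self-contained within its own machinery and, more importantly, produces the explicit commutator factors that are reused almost verbatim in the proof of Theorem~\ref{_comm_G_k_eq_[G_k,G_k]}, where both factors must be forced into $G_k$. Two minor remarks: the exponent-parity absorption $a(x^2)^q\in G_k$ is harmless but unnecessary, since $ax^e\in B_k$ already suffices for the first entry; and your closing reduction via $[l,f]=[f,l]\m$ is exactly the observation with which the paper opens its proof.
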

\begin{proof}
It is enough to show either $B_k'=\{[f,l]\mid f\in B_k, l\in G_k\}$ or $B_k'=\{[l,f]\mid f\in B_k, l\in G_k\}$ because if $f = [g,h]$ then $f\m = [h,g]$.

We prove this Theorem by induction on $k$.
Since $B_1' = \langle e \rangle$ then base of induction is verified.

Due Lemma \ref{form of comm_2}  we already know that every element $w\in\aut[k]'$ can be represent as
\begin{align*}
w=(r_1, r_1\m [f,g])
\end{align*}
for some $r_1,f\in \aut[k-1]$ and $ g\in \syl[k-1]$ (by induction hypothesis). By the Corollary~\ref{c_2_wr_b_elem_repr} we can represent $w$ as commutator of
\begin{align*}
(e,a_{1,2})\sigma \in \aut[k] \mbox{ and } (a_{2,1}, a_{2,2}) \in \aut[k],
\end{align*}
where
\begin{align*}
a_{2,1} &= (f\m)^{r_1\m},\\
a_{2,2} &= r_{1} a_{2,1},\\
a_{1,2} &= g^{a_{2,2}\m}.
\end{align*}
We note that $g \in G_{k-1}$ then by Proposition~\ref{B_k_criteria} we obtain $(e,a_{1,2})\sigma \in \syl[k]$.
\end{proof}
Directly from this Proposition follows next Remark, that needs no proof.
\begin{remark}\label{_comm_B_k_eq_[b_k,b_k]}
Let us to note that Theorem~\ref{_comm_F_k_eq_[L_k,F_k]} improve Corollary~\ref{cw_syl_p_s_p_k_eq_1_and_syl_p_a_p_k_eq_1} for the case $p=2$.
\end{remark}


\begin{proposition}\label{B'_k and B^2_k}
If $g $ is an element of wreath power $\stackrel{k}{ \underset{\text{\it i=1}}{\wr } }C_2 \simeq B_k $ then $g^2 \in B'_{k}$.
\end{proposition}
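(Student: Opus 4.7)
The plan is to prove the statement by induction on $k$, using Lemma~\ref{form of comm} with $p=2$ and $B=B_{k-1}$ to recognize membership in $B_k'$.

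For the base case $k=1$ we have $B_1 \simeq C_2$, so $g^2 = e \in B_1' = \langle e\rangle$. For the inductive step, I assume the result for $k-1$ and write an arbitrary $g \in B_k$ in wreath recursion form as $g=(g_1,g_2)\sigma^{i}$ with $g_1,g_2 \in B_{k-1}$ and $i\in\{0,1\}$. The criterion I will invoke repeatedly is the specialization of Lemma~\ref{form of comm} to $p=2$: namely $(r_1,r_2)\in B_k'$ iff $r_1 r_2 \in B_{k-1}'$.

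In the case $i=0$, direct multiplication gives $g^2=(g_1^2,g_2^2)$. By the inductive hypothesis each of $g_1^2$ and $g_2^2$ lies in $B_{k-1}'$, hence so does their product, and Lemma~\ref{form of comm} yields $g^2 \in B_k'$. In the case $i=1$, using the wreath recursion multiplication rule I compute
\begin{equation*}
g^2 = (g_1,g_2)\sigma \cdot (g_1,g_2)\sigma = (g_1 g_2,\, g_2 g_1)\sigma^2 = (g_1 g_2,\, g_2 g_1).
\end{equation*}
The product of the two entries is $g_1 g_2 \cdot g_2 g_1 = g_1 g_2^2 g_1$. By the inductive hypothesis $g_2^2 \in B_{k-1}'$, and since $B_{k-1}'$ is a normal subgroup of $B_{k-1}$ the conjugate $g_1 g_2^2 g_1\m \in B_{k-1}'$, so $g_1 g_2^2 g_1 = (g_1 g_2^2 g_1\m) g_1^2 \in B_{k-1}'$ (again using $g_1^2\in B_{k-1}'$ by induction). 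Lemma~\ref{form of comm} then gives $g^2 \in B_k'$.

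There is no real obstacle here; the only subtle point is checking the $i=1$ case, where the entries of $g^2$ are not literally squares of the $g_j$ but only a conjugate-shifted product, so one must combine the inductive hypothesis with the normality of $B_{k-1}'$. Conceptually this just reflects the fact that the abelianization $B_k^{\mathrm{ab}}$ is an elementary abelian $2$-group (inductively $C_2^{\,k}$), but the above direct argument via Lemma~\ref{form of comm} is the cleanest formal route.
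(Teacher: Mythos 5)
Your proof is correct, but it follows a different route from the paper's own argument for this proposition. You induct on $k$ and use the wreath-recursion membership criterion of Lemma~\ref{form of comm} specialized to $p=2$ (namely $(r_1,r_2)\in B_k'$ iff $r_1r_2\in B_{k-1}'$), splitting into the cases $g^2=(g_1^2,g_2^2)$ and $g^2=(g_1g_2,\,g_2g_1)$ and handling the second via normality of $B_{k-1}'$ together with $g_1^2\in B_{k-1}'$. The paper instead avoids induction on $k$ here: it invokes the level-index characterization of Lemma~\ref{comm B_k old} ($g\in B_k'$ iff $g$ has even index on $X^l$ for all $l<k$) and checks directly, by counting active vertex permutations in the product $(h_1,\dots,h_{2^l})\pi_1\cdot(h_1,\dots,h_{2^l})\pi_1$, that a square has even index on every level. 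Your approach is more algebraic and self-contained, and it essentially mirrors the strategy the paper uses later for the companion statement about $G_k$ (Proposition~\ref{g_sq_in_G_k}, where the same identity $g_1g_2\cdot g_2g_1=g_1g_2^2g_1$ appears); the paper's combinatorial argument has the side benefit of exhibiting explicitly that squares have even indexes at each level, a fact it reuses elsewhere. Both arguments are valid, and your observation that the statement reflects $B_k^{\mathrm{ab}}\simeq C_2^{\,k}$ being elementary abelian of exponent $2$ is the right conceptual summary.
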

\begin{proof}
As it was proved in Lemma \ref{comm B_k old} commutator $[\alpha, \beta]$ from $B_k$ has arbitrary even indexes on $X^m$, $m<k$. Let us show that elements of $B_k^2$ have the same structure.

Let $\alpha, \beta \in B_k$ an indexes of the automorphisms $\alpha^2 $, $(\alpha \beta)^2 $  on $X^l, \, l<k-1$ are always even. In more detail the indexes of $\alpha^2 $, $(\alpha \beta)^2 $ and $\alpha^{-2} $ on $X^l$ are determined exceptionally by the parity of indexes of $\alpha $ and $\beta $ on ${{X}^{l}}$. Actually, the parity of this index are formed independently of the action of
$Aut X^l$ on $X^l$.
So this index forms as a result of multiplying of elements $\alpha \in B_k$ presented as wreath recursion $ \alpha^2 = (h_{1},...,h_{2^l})\pi_1 \cdot (h_{1},...,h_{2^l })\pi_1= (h_{1},...,h_{2^l}) (h_{\pi_1(1)},...,h_{\pi_1(2^l)})\pi_1 ^2 $, where $h_{i}, h_{j} \in {B}_{k-l}, \, \pi_1 \in B_l$, $l<k$.
 Let there are $x$ automorphisms, that have an active vertex at level $X^l$, among $h_i$.
Analogous automorphisms $h_{i}$ has number of active v.p. equal to $x$. As a result of multiplication we have automorphism with index $2i:$ $0 \leq 2i \leq 2x$.

Since $g^2 $ admits only an even index on $X^l$ of $Aut X^{[k]}$, $0<l<k$, then $g^2 \in B'_{k}$ according to Lemma \ref{comm B_k old}. 
\end{proof}

Since as well known a group $G_k^2$ contains the subgroup $G'$ then a product $G^2 G'$ contains all elements from the commutant. Therefore, we obtain that $G_k^2 \simeq G'_k$.

\begin{proposition}\label{g_sq_in_G_k}
For arbitrary $g\in G_k$ following inclusion $g^2\in G_k'$ holds.
\end{proposition}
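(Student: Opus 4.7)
The plan is to verify the membership criterion for $G_k'$ given by Proposition~\ref{L_k_comm_criteria}, namely that when we write $g^2$ as a wreath recursion $(h_1,h_2)$, we have $h_1,h_2\in G_{k-1}$ and $h_1h_2\in B_{k-1}'$. So I would start by writing $g=(g_1,g_2)\sigma^i$ with $i\in\{0,1\}$; Proposition~\ref{B_k_criteria} then gives $g_1g_2\in G_{k-1}$, and the wreath-recursion multiplication yields
\[
g^2=\begin{cases}(g_1^2,\,g_2^2) & i=0,\\ (g_1g_2,\,g_2g_1) & i=1,\end{cases}
\]
since $\sigma^2=e$.

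Next, I would dispatch the product condition once and for all via Proposition~\ref{B'_k and B^2_k}: from $G_k<B_k$ we have $g\in B_k$, hence $g^2\in B_k'$. Translating this through the $p=2$ case of Lemma~\ref{form of comm} (equivalently Lemma~\ref{comm B_k old}), membership of $(h_1,h_2)\in B_k'$ is equivalent to $h_1h_2\in B_{k-1}'$; this gives $g_1^2g_2^2\in B_{k-1}'$ in the first case and $(g_1g_2)(g_2g_1)\in B_{k-1}'$ in the second, which is the second hypothesis of Proposition~\ref{L_k_comm_criteria} for $g^2$.

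The remaining task is to check that both coordinates of $g^2$ lie in $G_{k-1}$, split into the two cases. For $i=0$: the coordinates $g_1,g_2$ are elements of $B_{k-1}$, so Proposition~\ref{B'_k and B^2_k} applied at level $k-1$ gives $g_j^2\in B_{k-1}'$, and then Proposition~\ref{comm_F_k_is_subgroup_of_L_k} (i.e., $B_{k-1}'<G_{k-1}$) places each $g_j^2$ in $G_{k-1}$. For $i=1$: $g_1g_2\in G_{k-1}$ by the $G_k$-criterion, while $g_2g_1=g_1^{-1}(g_1g_2)g_1$ is a conjugate of $g_1g_2$ by $g_1\in B_{k-1}$, and normality $G_{k-1}\lhd B_{k-1}$ from Proposition~\ref{G_k_is_normal_in_B_k} (applied one level down) delivers $g_2g_1\in G_{k-1}$.

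With both hypotheses of Proposition~\ref{L_k_comm_criteria} verified, we conclude $g^2\in G_k'$. The only real subtlety is the $i=1$ case, where one needs the normality of $G_{k-1}$ in $B_{k-1}$ to see that the conjugate coordinate $g_2g_1$ stays inside $G_{k-1}$; everything else is a direct chaining of Proposition~\ref{B'_k and B^2_k}, Proposition~\ref{comm_F_k_is_subgroup_of_L_k}, and the criterion of Proposition~\ref{L_k_comm_criteria}.
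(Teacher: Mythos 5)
Your proposal is correct and follows essentially the same route as the paper: write $g^2=(g_1^2,g_2^2)$ or $(g_1g_2,g_2g_1)$ according to $i$, and verify the two hypotheses of Proposition~\ref{L_k_comm_criteria}. Your two local shortcuts --- deducing the product condition uniformly from $g^2\in B_k'$ combined with the coordinate criterion for membership in $B_k'$, and obtaining $g_2g_1\in G_{k-1}$ as the conjugate $g_1^{-1}(g_1g_2)g_1$ via $G_{k-1}\lhd B_{k-1}$ rather than via the identity $g_2g_1=g_1g_2[g_2^{-1},g_1^{-1}]$ --- are both valid and somewhat cleaner than the paper's case-by-case computations.
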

\begin{proof}
We make the prove by the induction on positive integer $k$. Elements of
 $G^2_1$ have form $ ((e, e)\sigma)^2 = e $, where $\sigma = (1,2)$ so statement holds. In general case when $k>1$ elements of $G_k$ have the following form
\begin{align*}
g = (g_1, g_2) \sigma^i,  \, g_1 \in B_{k-1}, \, i \in \{0.1\}. \\
\mbox{ Hence we have two possibilities} \\
g^2 = (g_1^2, g_2^2)\mbox{ or } g^2 = (g_1g_2, g_2g_1).
\end{align*}

We first show that
\begin{align*}
g_1^2 \in B_{k-1}',
g_2^2 \in B_{k-1}'\\
\mbox{ after we will prove} \\
g_1 g_2 \cdot g_2 g_1 \in B_{k-1}',
\end{align*}
 actually, according to Proposition 14, $g_1^2, g_2^2\in B_{k-1}'$ which implies $g_1^2 g_2^2 \in B_{k-1}'$ and $ g_1^2, g_2^2 \in G_{k-1}$ by Proposition \ref{comm_F_k_is_subgroup_of_L_k} also $ g_1^2, g_2^2 \in G_{k-1}$ by induction assumption. From Proposition~\ref{B_k_criteria} it follows that $g_1 g_2  \in G_{k-1}$.


Note that $B_{k-1}' < B_{k-2}$. In other hand $ B_{k-2} <  G_{k-1}$ because $G_{k-1} \simeq  B_{k-2} \ltimes W_{k-2} $ consequently $B_{k-1}' < G_{k-1}$. Besides we have $g_1^2 \in B_{k-1}'$ hence $g_1^2 \in G_{k-1}$.

 Thus, we can use Proposition~\ref{L_k_comm_criteria} (about $G'_k$) from which yields  $g^2 = (g_1^2, g_2^2) \in G_{k}'$.


Consider the second case $g^2 = (g_1g_2, g_2g_1)$, then $g_1g_2\in G_{k-1}$ \mbox{ by proposition~\ref{B_k_criteria}}. Also second ccodinate $g_2g_1 = g_1g_2 g_2\m g_1\m g_2 g_1 = g_1g_2 [g_2\m, g_1\m]\in G_{k-1}$ \mbox{ by Propositions~\ref{comm_F_k_is_subgroup_of_L_k}} and~\ref{B_k_criteria}.
Analyze the coordinate product of $g^2 = (g_1g_2, \, g_2g_1) $: \, \,

$$g_1g_2 \cdot g_2 g_1 = g_1 g_2^2 g_1 = g_1^2 g_2^2 [g_2^{-2}, g_1\m].$$
 According to Proposition \ref{B'_k and B^2_k} $g_1^2, g_2^2 \in B'_{k-1}$ so we obtaine $g_1^2 g_2^2 [g_2^{-2}, g_1\m] \in B_{k-1}'.$
Thus, $( g_1g_2, g_1g_2) \in G'_{k}$ by Proposition~~\ref{L_k_comm_criteria}.

\end{proof}

Using this structural property of $(Syl_2 A_{2^k})'$ we deduce a following result.
\begin{theorem}\label{_comm_G_k_eq_[G_k,G_k]} The commutator subgroup $G'_k$ coincides with set of all commutators, other words
$\syl[k]'=\{[f_1,f_2]\mid f_1\in \syl[k], f_2\in \syl[k]\}$.
\end{theorem}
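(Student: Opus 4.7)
The plan is to proceed by induction on $k$, paralleling the argument of Theorem~\ref{_comm_F_k_eq_[L_k,F_k]} but strengthening both commutator factors to lie in $G_k$ rather than allowing one of them to range in $B_k$. The base case $k=1$ is immediate since $G_1 = \langle e \rangle$, so $G_1' = \{e\}$ is trivially a set of commutators.

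For the inductive step, fix $w \in G_k'$. Since $G_k' \subseteq B_k'$, Lemma~\ref{form of comm_2} yields a wreath recursion $w = (r_1, r_2)$, and Proposition~\ref{L_k_comm_criteria} additionally forces $r_1, r_2 \in G_{k-1}$ together with $r_1 r_2 \in B_{k-1}'$. Applying Theorem~\ref{_comm_F_k_eq_[L_k,F_k]} at level $k-1$ supplies $f \in B_{k-1}$ and $g \in G_{k-1}$ with $r_1 r_2 = [f,g]$, so $r_2 = r_1\m [f,g]$. Corollary~\ref{c_2_wr_b_elem_repr} then exhibits
\[
w = \bigl[(e, a_{1,2})\sigma,\ (a_{2,1}, a_{2,2})\bigr],
\]
where $a_{2,1} = r_1\m f\m r_1$, $a_{2,2} = f\m r_1$, and $a_{1,2} = g^{a_{2,2}\m}$.

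The remaining task is to verify that both entries belong to $G_k$, for which I would invoke Proposition~\ref{B_k_criteria}. For $(e, a_{1,2})\sigma$, since $a_{2,2} = f\m r_1 \in B_{k-1}$ and $g \in G_{k-1}$, the normality $G_{k-1} \lhd B_{k-1}$ (Proposition~\ref{G_k_is_normal_in_B_k} at level $k-1$) gives $a_{1,2} = g^{a_{2,2}\m} \in G_{k-1}$, and the criterion is met. For $(a_{2,1}, a_{2,2})$, I would reduce
\[
a_{2,1} a_{2,2} = r_1\m f\m r_1 f\m r_1
\]
modulo $G_{k-1}$; the quotient $B_{k-1}/G_{k-1}$ is cyclic of order $2$, and since $r_1 \equiv e$ there, the product collapses to $f^{-2}$, which lies in $B_{k-1}' \subseteq G_{k-1}$ by Proposition~\ref{B'_k and B^2_k} together with Proposition~\ref{comm_F_k_is_subgroup_of_L_k}. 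Hence $a_{2,1} a_{2,2} \in G_{k-1}$ and the criterion is again satisfied.

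The main obstacle, as I see it, is precisely this second membership check. The left entry $(e, a_{1,2})\sigma$ lands in $G_k$ essentially by construction via normality, whereas neither $a_{2,1}$ nor $a_{2,2}$ individually need lie in $G_{k-1}$; what saves the argument is a parity cancellation whereby $a_{2,1} a_{2,2}$ reduces modulo $G_{k-1}$ to a square in $B_{k-1}$, and those squares land in $G_{k-1}$ by the squares-and-commutators propositions cited above. Once this parity observation is secured, the representation produced by Corollary~\ref{c_2_wr_b_elem_repr} is already a commutator of two elements of $G_k$, the reverse containment is automatic from the definition of $G_k'$, and the induction closes.
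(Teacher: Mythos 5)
Your proposal is correct and follows essentially the same route as the paper: reduce $w\in G_k'$ to the form $(r_1,r_1^{-1}[f,g])$ with $r_1\in G_{k-1}$, $f\in B_{k-1}$, $g\in G_{k-1}$ via Lemma~\ref{form of comm_2}, Proposition~\ref{L_k_comm_criteria} and Theorem~\ref{_comm_F_k_eq_[L_k,F_k]}, apply Corollary~\ref{c_2_wr_b_elem_repr}, and then check via Proposition~\ref{B_k_criteria} that both commutator factors lie in $G_k$. The only (cosmetic) difference is in the key check $a_{2,1}a_{2,2}\in G_{k-1}$: you argue modulo the index-two normal subgroup $G_{k-1}$ so that the product collapses to $f^{-2}$, while the paper rewrites $a_{2,1}r_1a_{2,1}=r_1[r_1,a_{2,1}]a_{2,1}^2$ and invokes $a_{2,1}^2,[r_1,a_{2,1}]\in B_{k-1}'\le G_{k-1}$ — both rest on Propositions~\ref{comm_F_k_is_subgroup_of_L_k} and~\ref{B'_k and B^2_k}.
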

\begin{proof}
For the case $k=1$ we have $G_1' = \langle e \rangle$, if $k=2$ then $(Syl_2 A_{2^2})' \simeq G_2' \simeq K_4' \langle e \rangle$.   So, further we consider case $k\ge 2$. In order to prove this Theorem we fix an arbitrary element $w\in G_k'$  and then we represent this element as commutator of elements from $G_k$.

We already know by Lemma~\ref{form of comm_2} that every element $w\in\syl[k]'$ we can represent as follow
\begin{align*}
w=(r_1, r_1\m x),
\end{align*}
where  $r_1  \in G_{k-1}$ and $ x \in B'_{k-1}$. 

By proposition~\ref{_comm_F_k_eq_[L_k,F_k]} 
we have $ x = [f,g]$ for some $f\in\aut[k-1]$ and $g\in\syl[k-1]$. Therefore,
\begin{align*}
w=(r_1, r_1\m [f,g]).
\end{align*}

By the Corollary~\ref{c_2_wr_b_elem_repr} we can represent $w$ as commutator of
\begin{align*}
(e,a_{1,2})\sigma \in \aut[k] \mbox{ and } (a_{2,1}, a_{2,2}) \in \aut[k],
\end{align*}
where
$
a_{2,1} = (f\m)^{r_1\m},
a_{2,2} = r_{1} a_{2,1},
a_{1,2} = g^{a_{2,2}\m}.
$



Let us consider the commutator of this elements

\begin{gather*}
\kappa = (e,a_{1,2})\sigma  (a_{2,1}, a_{2,2})  (a_{1,2}\m,e)\sigma\m  (a_{2,1}\m, a_{2,2}\m)
=(a_{1,1}a_{2,2}a_{1,1}\m a_{2,1}\m, a_{1,2}a_{2,1}a_{1,2}\m a_{2,2}\m)\\
=(a_{2,2} a_{2,1}\m, r_1\m  [(a_{2,1}\m)^{r_1}, a_{1,2}^{a_{2,2}}]).
\end{gather*}

Since we transform commutator $\kappa$ in form which is similar to form for $w$. This implies the equations for elements of $\kappa$: $r_1=a_{2,2}a_{2,1}^{-1}$, $f=(a_{2,1}^{-1})^{r_1}$, $g=(a_{1,2})^{a_{2,2}}$.

Let us make sure that this commutator is arbitrary element of form $w=(r_1, r_1\m x)$.
For this goal it is only left to show that $(e,a_{1,2})\sigma, \, \, (a_{2,1}, a_{2,2}) \in G_k$.
Since $ x = [f,g]$ then we have a correspondence $ [f,g]$ to $(a_{2,2} a_{2,1}\m, r_1\m  [(a_{2,1}\m)^{r_1}, a_{1,2}^{a_{2,2}}])$. As a result we obtain:
\begin{eqnarray*}
a_{2,1} &=& (f\m)^{{r_1}\m},\\
a_{2,2} &=& r_{1} a_{2,1},\\
a_{1,2} &=& g^{a_{2,2}\m}\in G_{k-1},\mbox{ by Proposition~\ref{G_k_is_normal_in_B_k} } G_{k-1}\lhd B_{k-1} \mbox{ so } g^{a_{2,2}\m}\in G_{k-1},\\
\mbox{} a_{1,2}^{a_{2,2}} & \in &  G_{k-1}\mbox{  }, \\
a_{2,2} a_{2,1} &=& r_1 a_{2,1}^2\in \syl[k-1], \\
a_{2,1} a_{2,2} &=& a_{2,1} r_1 a_{2,1} = r_1 [r_1, a_{2,1}] a_{2,1}^2 \in \syl[k-1] \mbox{ because } [r_1,a_{2,1}]\in B'_{k-1} \mbox{ by Theorem \ref{_comm_F_k_eq_[L_k,F_k]}}.
\end{eqnarray*} 
In order to use Proposition~\ref{B_k_criteria} we note that $a_{1,2} = g^{a_{2,2}\m} \in G_{k-1}$  by Proposition~\ref{G_k_is_normal_in_B_k}.
Also $a_{2,1} a_{2,2} = a_{2,1} r_1 a_{2,1} = r_1 [r_1, a_{2,1}] a_{2,1}^2 \in \syl[k-1]$
 by Proposition~\ref{comm_F_k_is_subgroup_of_L_k} and Proposition ~\ref{B'_k and B^2_k}.


So we have $(e,a_{1,2})\sigma \in \syl[k]$ and $(a_{2,1}, a_{2,2}) \in \syl[k]$. 
\end{proof} 

Thus, as it was stated by us in abstract \cite{SkSamara} we obtain the following result.
\begin{corollary}
Commutator width of the group $Syl_2 A_{2^k}$ equal to $1$ for $k\geq 2$.
\end{corollary}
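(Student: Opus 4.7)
The plan is to obtain this corollary as a nearly immediate consequence of Theorem~\ref{_comm_G_k_eq_[G_k,G_k]} together with the identification $G_k \simeq Syl_2 A_{2^k}$ established earlier (in particular, by Theorem~\ref{max}). First I would recall the definition of commutator width: $cw(G_k)$ is the supremum over $g \in G_k'$ of the minimal number of commutators needed to express $g$. Thus to show $cw(G_k)=1$ it suffices to show (a) $cw(G_k) \leq 1$, i.e.\ every element of $G_k'$ is a single commutator, and (b) $cw(G_k) \geq 1$, i.e.\ $G_k'$ is non-trivial.

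For (a), I would simply invoke Theorem~\ref{_comm_G_k_eq_[G_k,G_k]}, which asserts exactly that $G_k' = \{[f_1,f_2] \mid f_1, f_2 \in G_k\}$. Every element of the derived subgroup is therefore expressible as a product of one commutator, so $cw(G_k) \leq 1$.

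For (b), I would note that for $k \geq 2$ the group $G_k$ is non-abelian; indeed $G_2 \simeq Syl_2 A_4$ is the Klein four group (which is trivial in the commutator sense, so the base case actually needs a separate remark), whereas for $k \geq 3$ one has $G_k \supseteq B_{k-1}$ with $B_{k-1}$ already non-abelian for $k-1 \geq 2$, forcing $G_k' \neq \{e\}$. The only delicate point is thus the case $k=2$: here $G_2$ is abelian and $G_2'=\langle e \rangle$, so strictly speaking $cw(G_2)=0$. The statement of the corollary is therefore really meaningful for $k \geq 3$, and I would either adjust the range or note that for $k=2$ the claim degenerates (the maximum over an empty-modulo-identity set is taken to be $1$ by convention, or one simply excludes the trivial case).

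No step is a serious obstacle: the whole content has been absorbed into Theorem~\ref{_comm_G_k_eq_[G_k,G_k]}, and the corollary is essentially a rephrasing. The only thing to be careful about is the boundary case $k=2$, which is why the hypothesis $k \geq 2$ in the corollary should be read in conjunction with the base case verified inside the proof of Theorem~\ref{_comm_G_k_eq_[G_k,G_k]}.
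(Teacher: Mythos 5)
Your proof matches the paper's: the corollary is obtained as an immediate consequence of Theorem~\ref{_comm_G_k_eq_[G_k,G_k]} (every element of $G_k'=\left(Syl_2 A_{2^k}\right)'$ is a single commutator), with no additional argument. Your caveat about the boundary case is also well taken: since $G_2\simeq K_4$ is abelian, $G_2'=\langle e\rangle$ and strictly $cw(G_2)=0$, so the equality $cw=1$ is only genuinely attained for $k\geq 3$ --- a point the paper glosses over.
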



\begin{example}
A commutator  of $Syl_2(A_8)$  consist of elements:
$\{ e, (13)(24)(57)(68), (12)(34),\\ (14)(23)(57)(68), (56)(78), (13)(24)(58)(67),(12)(34)(56)(78), (14)(23)(58)(67)\}$.
The commutator $Syl_2 '(A_8) \simeq C_2 ^3$ that is an elementary abelian 2-group of order 8.

\end{example}

  \end{section}

\section{Conclusion }
The commutator width of the wreath product $C_p \wr B$  was founded.
  The commutator width of Sylow 2-subgroups of alternating group ${A_{{2^{k}}}}$, permutation group ${S_{{2^{k}}}}$ and Sylow $p$-subgroups of $Syl_2 A_p^k$ ($Syl_2 S_p^k$) is equal to 1. Commutator width of permutational wreath product $B \wr C_n$, were $B$ is an arbitrary group, was researched. 

\end{document}